\newtheorem{remark}[theorem]{Remark}
\newtheorem{example}[theorem]{Example}
\newcommand{\Id}{\mathbb I}
\newcommand{\SO}{\mathbb S}
\newcommand{\Xs}{\mathbb X}
\newcommand{\M}{\mathbb M}
\newcommand{\U}{\mathsf U}
\newcommand{\G}{\mathsf G}
\newcommand{\W}{\mathsf W}
\newcommand{\R}{\mathsf R}
\newcommand{\T}{\mathsf T}
\newcommand{\K}{\mathsf K}
\newcommand{\Z}{\mathsf Z}
\newcommand{\f}{\mathbf f}
\newcommand{\C}{\mathbf c}
\newcommand{\Q}{\Xi}
\newcommand{\Lw}{\mathsf L}
\newcommand{\deim}{\textsf{DEIM}\xspace}
\newcommand{\dime}{\textsf{Q-DEIM}\xspace}
\newcommand{\dimer}{\textsf{Q-DEIMr}\xspace}
\DeclareMathOperator*{\argmax}{arg\,max}
		\author{Zlatko Drma\v{c}\thanks{Faculty of Science, Department of Mathematics, University of Zagreb,
		Bijeni\v{c}ka 30, 10000 Zagreb, Croatia.} \and Serkan Gugercin\thanks{Department of Mathematics,
		Virginia Polytechnic Institute and State University,
		460 McBryde, Virginia Tech,
		Blacksburg, VA 24061-0123.}}
\title{A New Selection Operator for the Discrete Empirical Interpolation Method -- improved a priori error bound and extensions}
\begin{document}
	\maketitle
	\begin{abstract}
		This paper introduces a new framework for constructing the Discrete Empirical Interpolation Method (\deim) projection operator. \textcolor{black}{The interpolation node selection procedure is formulated using the QR factorization with column pivoting, and it enjoys a sharper error bound for the \deim projection error. }
		\textcolor{black}{Furthermore, for a subspace $\mathcal{U}$ given as the range of an orthonormal $\U$,  
		the \deim projection does not change if $\U$ is replaced by $\U \Omega$ with arbitrary unitary matrix $\Omega$.} 		
		\textcolor{black}{In a large-scale setting, the new approach allows modifications that use only randomly sampled rows of $\U$, but with the potential of producing good approximations with corresponding probabilistic error bounds.}
		\textcolor{black}{Another salient feature of the new framework is that robust and efficient   software implementation is easily developed, based on readily available high performance linear algebra packages.}
	\end{abstract}

\begin{keywords}
	empirical interpolation, nonlinear model reduction, proper orthogonal decomposition, projections, QR factorization, randomized sampling, rank revealing factorization
\end{keywords}

\begin{AMS}
	15A12, 15A23, 65F35, 65M20, 65M22, 93B40, 93C15
\end{AMS}

\section{Introduction}\label{S=Intro}
Direct numerical simulation of dynamical systems plays a crucial role in studying 
a great variety of complex physical phenomena in areas 
ranging from neuron modeling to microchip design. 
The ever-increasing demand for accuracy 
leads to dynamical systems of ever-larger scale and complexity.
Simulation in such large-scale settings can make overwhelming 
demands on computational resources; thus creating a need for model reduction to 
create  smaller, faster approximations to complex dynamical systems that still guarantee high fidelity. 

\subsection{Model Reduction by Galerkin Projection}
Consider the following nonlinear dynamical system of ordinary differential equations (ODE)
\begin{equation} \label{nlfom}
E\, \dot{x}(t)  = A \,x(t)  + \f(x(t)) + B\,\mathbf{g}(t) ,\;\; \textcolor{black}{t\geq 0,}
\end{equation}
where $E, A\in \mathbb{R}^{n\times n}$, $B\in \mathbb{R}^{n\times \nu}$, $\f: \mathbb{R}^{n} \to\mathbb{R}^{n} $ and $\mathbf{g}: \textcolor{black}{[0,\infty)} \to\mathbb{R}^{\nu} $. In (\ref{nlfom}), $x(t) \in \mathbb{R}^n$ is  the state and $\mathbf{g}(t)$ is the external forcing term (input); thus (\ref{nlfom}) has  $n$ degrees of freedom and $\nu$ inputs.

Systems of the form (\ref{nlfom}) with very large state-space dimension ($n \approx O(10^6)$ or higher) arise in many disciplines and typically originate from discretization of partial differential equation models.  The goal of model reduction is to replace (\ref{nlfom}) with a reduced surrogate dynamical system  having much lower state space dimension, $r\ll n$.  The reduced model \textcolor{black}{will then} have the structure 
\begin{equation} \label{nlrom}
E_r\, \dot{x}_r(t)  = A_r \,x_r(t)  + \f_r(x_r(t)) + B_r\,\mathbf{g}(t),
\end{equation}
where $E_r, A_r\in \mathbb{R}^{r\times r}$, $B_r\in \mathbb{R}^{r\times \nu}$, and $\f_r: \mathbb{R}^{r} \to\mathbb{R}^{r} $.

We will use a Galerkin projection to construct  
 the reduced model (\ref{nlrom}): 
Let  ${\mathcal V}_r$ be an $r$-dimensional subspace \textcolor{black}{spanned by the columns of} $V \in \mathbb{R}^{n\times r}$. Then, we approximate the full-state
$x(t)$ using the ansatz $x(t) \approx V x_r(t)$ and enforce the Galerkin orthogonality condition 
\textcolor{black}{$\displaystyle
E V\dot{x}_r(t) - A V x_r(t) -\f\big(V x_r(t)\big)-B \mathbf{g}(t)\perp {\mathcal V}_r
$}
to obtain the reduced model  (\ref{nlrom}) with the reduced model quantities given by
\begin{equation}  \label{red_projection}
E_r= V^{T} E V,~~A_r= V^{T} A V,~~B_r = V^TB,~{\rm and}~~  \f_r(x_r(t))=V^T\f(Vx_r(t)).
\end{equation}
\subsection{Galerkin Projection using Proper Orthogonal Decomposition}
For linear dynamical systems, i.e., when $\f = \mathbf{0}$ in (\ref{nlfom}), a plethora of methods exist to perform  model reduction: These include gramian based methods such as Balanced Truncation \cite{mullis1976synthesis,moore1981principal} and Optimal Hankel Norm Approximation  \cite{glover1984all} or rational interpolation based methods such Iterative Rational Krylov Algorithm \cite{gugercin2008hmr}. 
 These methods rely on the concept of transfer function and perform   model reduction independent of the input $\mathbf{g}(t)$. These ideas have been recently extended to systems with bilinear \cite{bai2006projection,BreitenBenner2012b,flagg2013multipoint} and quadratic nonlinearities \cite{gu2011qlmor,benner2015two}. \textcolor{black}{For} general nonlinearities,  Proper Orthogonal Decomposition (POD) is the most-commonly used method. POD \cite{lumley,berkooz}
obtains the model reduction basis  $V$ from a  truncated SVD
approximation to a matrix of ``snapshots'', a numerically computed trajectory of the full model. 
It is related to methods (and known \textcolor{black}{by} other names) \textcolor{black}{such as}
Principal Component Analysis (PCA) in statistical analysis {\cite{hotelling}} and  Karhunen-Lo\'{e}ve expansion  \cite{loeve}  in stochastic analysis.

To construct the model reduction basis $V$ via POD, one performs a numerical simulation of (\ref{nlfom}) for an input $\mathbf{g}(t)$ and initial condition $x_0$. Let $  x_0, x_1, \ldots,   x_{N-1}$ denote the {\it snapshots} resulting from this numerical simulation; i.e, $x_i = x(t_i) \in \mathbb{R}^n$ for $i=0,1,\ldots,N-1$.  Construct the {\it POD snapshot matrix}
\begin{equation} \label{Xsnap}
\Xs = \left[~x_0, x_1, x_2, \ldots,x_{N-1}\right] \in \mathbb{R}^{n \times N}
\end{equation}
  and compute its thin SVD
\begin{equation}
  \Xs =   Z \boldsymbol{\Sigma}   Y^T,
\end{equation}
where $  Z \in \mathbb{R}^{n \times N}$, $\boldsymbol{\Sigma} \in \mathbb{R}^{N \times N}$,  and $Y \in \mathbb{R}^{N \times N}$ with $Z^TZ = Y^T Y = \Id_N$, and $\boldsymbol{\Sigma} = \mbox{diag}(\sigma_1, \sigma_2, \ldots, \sigma_{n_s})$, with $\sigma_1 \geq \sigma_2 \geq \ldots \geq \sigma_{N} \geq 0$.
Then model reduction by POD chooses  $ V$  as the leading $r$ left singular vectors of $\Xs$  corresponding to the $r$ largest singular values. Using  MATLAB notation, this corresponds to $V = Z(:,1:r)$. This basis selection by POD minimizes  $\sum_{i=0}^{N} \|   x_i -   \Phi   \Phi^T   x_i \|_2^2$ over all \textcolor{black}{$\Phi \in \mathbb{R}^{n\times r}$} with orthonormal columns. 
\textcolor{black}{Since the objective function does not change if $\Phi$ is post-multiplied by an arbitrary $r\times r$ orthogonal matrix, this procedure actually seeks an $r$--dimensional subspace that optimally captures the snapshots in the least squares sense.} 
For more details on POD, we refer the reader to 
\cite{hinze2005proper,kunisch2002galerkin}.

\subsection{The lifting bottleneck}
Even though the state $x_r(t)$ of the reduced model  (\ref{nlrom}) lives in an $r$-dimensional subspace, 
definition of the reduced nonlinear term $\f_r(x_r(t)) = V^T\f(Vx_r(t))$  in (\ref{red_projection}) requires
lifting $x_r(t)$ back to the full $n$-dimensional subspace in order to evaluate the nonlinear term; this is known as the lifting bottleneck and degrades the performance of reduced models for nonlinear systems.
Various approaches exist to tackle this issue; see, e.g.,  \cite{Everson1995,barrault04-EIM,Astrid2008, DEIM,Carlberg2013}. In this paper, we focus on
 the \emph{Discrete Empirical Interpolation Method} (\deim) \cite{DEIM}, a discrete variant of the Empirical Interpolation Method introduced in \cite{barrault04-EIM}. 

As explained in the original source \cite{DEIM}, \deim can be used to approximate and efficiently evaluate a general nonlinear function $\f$, which is not necessarily tied to the  model reduction set-up we discussed above. For example, $\f(\tau)$ could  be a vector-valued function of possibly multidimensional parameter $\tau$. Therefore, following \cite{DEIM}, we will present the \deim construction and our analysis for a generic nonlinear vector valued function   $\f(\tau)$, yet will point out the implications for nonlinear model reduction.

\subsection{\deim}
Given a nonlinear function $\f:\mathcal{T}\longrightarrow \mathbb{R}^n$ 
\textcolor{black}{with $\mathcal{T} \subset  \mathbb{R}^d$}
 and a matrix  $\mathsf{U}\in\mathbb{R}^{n\times m}$ of rank $m$, \deim  approximation of $\f$ is defined by \cite[Definition 3.1]{DEIM}
\begin{equation} \label{eq_deim}
\widehat{\f}(\tau) = \U (\SO^T \U)^{-1}\SO^T \f(\tau) ,
\end{equation}
where $\SO$ is $n\times m$ matrix obtained by selecting certain columns of the $n\times n$ identity matrix $\Id$.   
With the \deim approximation to $\f$ defined as in (\ref{eq_deim}), the  nonlinear term in the reduced model
(\ref{nlrom}) is now \textcolor{black}{approximated} by
\begin{equation} \label{eq_deim_fr}
\f_r(x_r(t)) \textcolor{black}{\approx} V^T \U (\SO^T \U)^{-1}\SO^T \f(Vx_r(t)).
\end{equation}
An effective numerical implementation of $\f_r(x_r(t))$ is different than its analytical formula in (\ref{eq_deim_fr})  and allows computing $\f_r(x_r(t))$ without lifting $x_r(t)$ to the full dimension $n$ and by only selecting a certain rows of $V x_r(t)$.  We skip those details and refer the reader to \textcolor{black}{\cite[\S 3.4]{DEIM}}.
 
 \paragraph{Computation of the \deim basis $\U$}

In an application, the matrix $\U$ can be computed as follows. For a finite grid $\mathcal{T}_{\boxplus}\subset\mathcal{T}$, 
the function is sampled at $\tau_j\in\mathcal{T}_{\boxplus}$ and, as done for state $x(t)$ in POD for model reduction,  the function values, {\it nonlinear snapshots},  are collected in a matrix $F$, i.e.,
$F = [\f(\tau_1),\f(\tau_2),\ldots,\f(\tau_\kappa)]$.
If $\f(\tau)$ is $n_1\times n_2$ matrix valued, the $\mathrm{vec}(\cdot)$ operator is used to map its range to $\mathbb{{R}}^{n_1\cdot n_2}$.  Then, an orthogonal projection $\Omega=\U\U^T$, of
low rank $m$, onto the range  $\mathcal{U}=\mathcal{R}(\U)$
is constructed so that $\| F - \Omega F\|_F$ is minimal. Typically, $m\ll n$.
Therefore, $\U$ can be considered as the POD basis for the nonlinear snapshots.
 The hope is that the range of $\U$ will capture the values of $\f$ over the entire parameter space, i.e.,
$\|\f(\tau) - \U\U^T \f(\tau)\|_2$ will be sufficiently small at any $\tau\in\mathcal{T}$.

The role of $\SO$, which we will call \emph{selection operator}, is to strategically pick coordinate indices in $\mathbb{R}^n$ at which the approximant interpolates
$\f$. (Note that $\SO^T \widehat{\f}(\tau) = \SO^T \f(\tau)$.)
The \deim algorithm, proposed in \cite{DEIM}, forces the selection operator $\SO$ to seek $m$ linearly independent rows of $\U$ such that the local growth of the spectral norm of $(\SO^T \U)^{-1}$ is limited via a greedy search, as implemented in Algorithm \ref{ALG:DEIM}.
This objective is founded in the following theoretical basis of \deim \cite[Lemma 3.2]{DEIM}:	
\begin{lemma}\label{Lemma:deim0}
		Let $\U\in\mathbb{R}^{n\times m}$ be orthonormal ($\U^* \U=\Id_m$, $m<n$) and let 
		\begin{equation}\label{eq:D*f}
		\widehat{f} = \U(\SO^T \U)^{-1}\SO^T f 
		\end{equation}
		be the \emph{\deim} projection of an arbitrary $f\in\mathbb{R}^n$, with $\SO$ computed by Algorithm \ref{ALG:DEIM}. Then 
	\begin{equation}\label{eq:DEIM-error0}
	\| f - \widehat{f} \|_2 \leq \C \| (\Id-\U\U^*)f\|_2,\;\;\C = \| (\SO^T \U)^{-1} \|_2 ,
	\end{equation}
	where 
	$$
	\C \leq \frac{(1+\sqrt{2n})^{m-1}}{\|u_1\|_{\infty}} \leq \sqrt{n} (1+\sqrt{2n})^{m-1} .
	$$	
\end{lemma}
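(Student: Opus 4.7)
The plan is to separate the claim into an error identity of the form (\ref{eq:DEIM-error0}) and an inductive estimate of the growth constant $\C$ that is specific to the greedy selection in Algorithm \ref{ALG:DEIM}.

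First I would observe that $\mathcal{Q} := \U(\SO^T\U)^{-1}\SO^T$ is idempotent with $\mathcal{R}(\mathcal{Q}) = \mathcal{R}(\U)$, so the orthogonal projector $P := \U\U^*$ satisfies $\mathcal{Q} P = P$ and hence $(\Id - \mathcal{Q})P = 0$. Writing $f = Pf + (\Id-P)f$ then gives
\[
f - \widehat{f} = (\Id-\mathcal{Q})f = (\Id-\mathcal{Q})(\Id-P)f ,
\]
so $\|f-\widehat{f}\|_2 \leq \|\Id-\mathcal{Q}\|_2 \,\|(\Id-P)f\|_2$. Since $\mathcal{Q}$ is a nontrivial idempotent, the classical identity $\|\Id-\mathcal{Q}\|_2 = \|\mathcal{Q}\|_2$ applies, and submultiplicativity together with $\|\U\|_2 = 1$ and $\|\SO^T\|_2 = 1$ yields $\|\mathcal{Q}\|_2 \leq \C$, which is (\ref{eq:DEIM-error0}).

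For the bound on $\C$ I would induct on the number $k$ of indices selected so far, writing $\U_k$ and $\SO_k$ for the corresponding leading objects produced by Algorithm \ref{ALG:DEIM}. The base case $k=1$ is immediate: $\SO_1^T \U_1 = u_1(p_1) = \|u_1\|_{\infty}$ by the greedy rule, so $\|(\SO_1^T\U_1)^{-1}\|_2 = 1/\|u_1\|_{\infty}$. For $k \geq 2$, the block partition
\[
\SO_k^T \U_k = \begin{pmatrix} \SO_{k-1}^T\U_{k-1} & \SO_{k-1}^T u_k \\ e_{p_k}^T \U_{k-1} & u_k(p_k) \end{pmatrix}
\]
and the Schur complement formula produce an expression for $(\SO_k^T\U_k)^{-1}$ whose pivot is exactly $r_k(p_k)$, where $r_k = u_k - \U_{k-1}(\SO_{k-1}^T\U_{k-1})^{-1}\SO_{k-1}^T u_k$ is the residual constructed by Algorithm \ref{ALG:DEIM}. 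The greedy rule $p_k = \argmax_i |r_k(i)|$ gives $|r_k(p_k)| = \|r_k\|_{\infty} \geq \|r_k\|_2/\sqrt{n}$, while orthonormality of $\U$ forces $\|r_k\|_2 \leq \|u_k\|_2 = 1$. Plugging these estimates into the block-inverse formula yields the one-step recursion
\[
\|(\SO_k^T\U_k)^{-1}\|_2 \leq (1+\sqrt{2n})\,\|(\SO_{k-1}^T\U_{k-1})^{-1}\|_2 ,
\]
which telescopes to $\C \leq (1+\sqrt{2n})^{m-1}/\|u_1\|_{\infty}$; the crude estimate $\|u_1\|_{\infty} \geq 1/\sqrt{n}$ (since $\|u_1\|_2 = 1$) then gives the weaker second inequality.

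The main obstacle will be the careful book-keeping inside the inductive step: one has to track simultaneously the $\sqrt{2}$ arising from estimates of the form $\|(a,b)^T\|_2 \leq \sqrt{2}\max(|a|,|b|)$ when assembling the off-diagonal blocks of the block inverse, and the $\sqrt{n}$ penalty incurred in passing from $\|r_k\|_2$ to $\|r_k\|_{\infty}$. Beyond these two pieces, the argument is routine block linear algebra combined with the orthonormality of $\U$ and the greedy property invoked at each DEIM step.
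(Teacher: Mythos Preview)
The paper does not supply its own proof of this lemma; it is quoted verbatim from \cite[Lemma~3.2]{DEIM} and used as background. Your outline is essentially the argument of Chaturantabut and Sorensen: the oblique--projector identity $(\Id-\mathcal{Q})f=(\Id-\mathcal{Q})(\Id-\U\U^*)f$ together with $\|\Id-\mathcal{Q}\|_2=\|\mathcal{Q}\|_2\le\C$ gives (\ref{eq:DEIM-error0}), and the growth of $\C$ is controlled inductively via the Schur complement $r_k(p_k)$ of the $2\times 2$ block matrix $\SO_k^T\U_k$.

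There is one slip you should fix before carrying out the induction. You write that ``orthonormality of $\U$ forces $\|r_k\|_2\le\|u_k\|_2=1$''. This inequality goes the wrong way and is in fact false: since $u_k\perp\mathcal{R}(\U_{k-1})$, Pythagoras gives
\[
\|r_k\|_2^2=\|u_k-\U_{k-1}z\|_2^2=\|u_k\|_2^2+\|\U_{k-1}z\|_2^2=1+\|z\|_2^2\ \ge\ 1 .
\]
Fortunately the correct direction is exactly what the argument needs: combined with $\|r_k\|_\infty\ge\|r_k\|_2/\sqrt{n}$ it yields $|r_k(p_k)|^{-1}\le\sqrt{n}$, which is the control on the Schur complement that drives the recursion. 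With this correction (and the bounds $\|e_{p_k}^T\U_{k-1}\|_2\le 1$, $\|\SO_{k-1}^T u_k\|_2\le 1$ on the off-diagonal blocks), the block-inverse estimate produces the factor $(1+\sqrt{2n})$ per step as you claim, and the telescoping and the final inequality $\|u_1\|_\infty\ge n^{-1/2}$ are straightforward.
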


\begin{algorithm}[hh]
	\caption{\textcolor{black}{\deim (Discrete Empirical Interpolation Method)} \cite[Algorithm 1]{DEIM}} \label{ALG:DEIM}
	\begin{algorithmic}[1]
		\STATE \textbf{Input:} $u_1,\ldots, u_m$ linearly independent.
		\STATE \textbf{Output:} Selection operator $\SO = \SO_m$ (implicitly by $\wp_m$). 
		\STATE $ p_1 = \argmax_i(|u_1(i)|)$ ; $\U_1=[u_1]$; $\SO_1=[e_{p_1}]$ ; $\wp_1= [p_1]$
		\FOR{$j=2 : m$}
		\STATE Solve $\SO_{j-1}^T\U_{j-1} z= \SO_{j-1}^T u_j$ for $z$ ; 
		\STATE $r_j = u_j - \U_{j-1} z$ ;
		 $p_j = \argmax_i(|r_j(i)|)$ ; 
		\STATE $\U_j = [ \U_{j-1}, u_j]$ ; $\SO_j=[\SO_{j-1} , e_{p_j}]$ ; $\wp_j = (\wp_{j-1} , p_j)$ ;
		\ENDFOR
	\end{algorithmic}
\end{algorithm}	
%
%
Hence, we can focus on a pure matrix theoretical problem:\footnote{From now on, we consider the problem over the complex field.} \emph{Given orthonormal $\U\in\mathbb{{C}}^{n\times m}$ ($\U^*\U=\Id_m$) find a row selection matrix $\SO$ with $\|(\SO^T\U)^{-1}\|_2$ as small as possible.} If $\mathcal{R}(\mathsf{U})$ captures the behavior of $\f$ well over the given parameter space, and if $\SO$ results in a moderate
value of $\C$ in  (\ref{eq:DEIM-error0}), the \deim approximation will succeed. 

The error bound (\ref{eq:DEIM-error0}) in Lemma \ref{Lemma:deim0} is rather pessimistic and the \deim projection usually performs substantially better in practice, 
see \cite{DEIM} for several illustrations of superior performance of \deim. Hence, \textcolor{black}{this} is an interesting  theoretical question:  \emph{can the upper bound can be improved, and what selection operator $\SO$ will have \textcolor{blue}{a} sharper a priori error bound, perhaps only mildly dependent on $n$?}.


Note that $\SO$ computed in Algorithm \ref{ALG:DEIM}  depends on a particular basis for $\mathcal{U}$; just reordering the basis vectors may result in different $\SO$. If, for example, $\U$ consists of the left singular vectors of the $m$ dominant singular values of the data samples matrix $F$, and if some of those singular values are multiple or tightly clustered, then some singular vectors (columns of $\U$) are non-unique or are numerically \textcolor{black}{badly determined by the data} and the computed $\U$ could be algorithm dependent. But the subspace they span is well-determined. Therefore, from both \textcolor{black}{the} theoretical and  practical points of view, it is \textcolor{black}{important to ask} \emph{whether we can efficiently construct 
$\SO$ with an a priori assurance that $\C$ will be moderate and independent of the choice of an orthonormal basis $\U$ of $\mathcal{U}$.}

Our interest for studying \deim in more detail was triggered by the above theoretical questions from a numerical linear algebra point of view, and by a practical question of efficient implementation of \deim as  mathematical software on high performance computing machinery.
\textcolor{black}{In \S \ref{SS:DEIM-LUPP}, the complexity of Algorithm \ref{ALG:DEIM} is estimated to be $O(m^2 n)+O(m^3)$. Unfortunately,} it has unfavorable flop per memory reference ratio (level 2 BLAS) which precludes efficient software implementation. It would be advantageous to have an algorithm based on BLAS 3 building blocks, with potential for parallel implementations. Furthermore, we may ask \emph{whether the contribution of the factor $n$ in the overall complexity can be reduced or even removed (e.g. using only a subset of the rows of $\mathsf{U}$) without substantial loss in the quality of the computed selection operator.}  

Fortunately,  \textcolor{black}{an} affirmative answer to all  \textcolor{black}{the} questions above is surprisingly simple and effective: QR factorization with column pivoting of $\U^*$. Our new implementation of \deim,
designated as \dime ,
computes $\SO$ independent of a particular orthonormal basis $\U$,  enjoys a better upper bound for the condition number $\C$ of the \deim projection, and in practice computes $\SO$ with usually smaller value of $\|(\SO^T\U)^{-1}\|_2$ than the original \deim algorithm.
A further advantage of \dime is that it is based on numerically robust high performance procedures, already available in software packages \textcolor{black}{such} as LAPACK, ScaLAPACK, MATLAB, so no additional effort is needed for 
\textcolor{black}{tuning} high performance \deim. The details and a theoretical foundation of \dime are given in \S \ref{S=dime}. In particular, in \S \ref{SS=dime} we provide a selection procedure and theoretical analysis showing that the \deim projection is almost as good as the orthogonal projection onto the range of $\U$. Numerical experiments that illustrate the performance of \dime in the context of nonlinear model reduction  are presented in \S \ref{S=examples}. In \S \ref{SS=dimer}, we show that accurate \deim projection is possible even with using only a small portion of the rows of $\U$, and we introduce \dimer, a restricted and randomized \deim selection that combines the technique used in \dime with the ideas of 
randomized sampling. Further developments and applications 
are outlined in \S \ref{S=Conclusion}.

\section{A new \deim framework}\label{S=dime}  
A key observation leading to a selection strategy presented in this section is based on a solution to a similar problem in \cite{drmac-block-jacobi}, arising in the proof of global convergence of a block version of the Jacobi algorithm for diagonalization of Hermitian matrices. There, a row permutation is needed such that the $(1,1)$ diagonal block of a family of $2\times 2$ block partitioned unitary matrices has a uniform lower bound for its smallest singular value, independent of the family and only depending on the parameters of the partition (block dimensions). 

It is clear that the
selection of well conditioned submatrices is deeply connected with rank \textcolor{black}{revelation}, and that in fact the most reliable rank revealing QR factorizations are indeed based on selecting certain well conditioned submatrices, see e.g. \cite{chandras-ipsen-rrqr-94}. Note, however, that in our case here, the rank is not an issue, as our matrix $\U$ is orthonormal. 

We adapt the strategy from \cite{drmac-block-jacobi} and use it in \S \ref{SS=dime} as a basis for introducing a new framework for construction of the \deim projection; the result is a new selection method, called \dime, with an improved theoretical bound on $\C$ and with the selection operator invariant under arbitrary changes of the orthonormal basis of the range of $\U$. We also use the seminal work of Goreinov, Tyrtyshnikov and Zamarshkin \cite{Goreinov19971} to show that \deim projection is not only numerically but also theoretically almost as good as the orthogonal projection, up to a factor of the dimension. 

\subsection{\dime -- a new selection procedure}\label{SS=dime}
An answer to all practical questions raised in \S \ref{S=Intro} is given in the following theorem.
Its constructive proof is based on \cite{drmac-block-jacobi}, but we provide all the details for the reader's convenience, and also because we need them in the further developments in  \S \ref{SS=dimer}.
	
\begin{theorem}\label{Theorem:2mbound}
Let $\mathsf{U}\in\mathbb{C}^{n\times m}$, $\mathsf{U}^*\mathsf{U}=\Id_m$, $m<n$. Then :
\begin{itemize}
\item  There exists an algorithm to compute a selection operator $\SO$ with complexity $O(nm^2)$, such that
\begin{equation}\label{eq:2m-bound}
\| (\SO^T \mathsf{U})^{-1}\|_2  \leq {\sqrt{n-m+1}}\,\, \frac{\sqrt{4^m+6 m - 1}}{3},
\end{equation}	
and for any $f\in\mathbb{C}^n$
\begin{equation}\label{eq:dime-error1}
\| f - 	\U(\SO^T \U)^{-1}\SO^T f \|_2 \leq \sqrt{n} \,O(2^m)\, \| f - \U\U^* f \|_2 .
\end{equation}
If $\mathsf{U}$ is only full column rank, then the bound (\ref{eq:2m-bound}) changes to
\begin{equation}\label{eq:2m-bound1}
\| (\SO^T \mathsf{U})^{-1}\|_2 \leq \frac{\sqrt{n-m+1}}{\sigma_{\min}(\mathsf{U})} \,\frac{\sqrt{4^m+6 m - 1}}{3}.
\end{equation}

\item There exists a selection operator $\SO_{\star}$ such that the \deim projection error is bounded by 
\begin{equation}\label{eq:dime-error2}
\| f - 	\U(\SO_{\star}^T \U)^{-1}\SO_{\star}^T f \|_2 \leq \sqrt{1+m(n-m)} \, \| f - \U\U^* f \|_2 .
\end{equation}
\item The selection operators $\SO$, $\SO_{\star}$ do not change if $\mathsf{U}$ is changed to 
$\mathsf{U} \Omega$, where $\Omega$ is arbitrary $m\times m$ unitary matrix, i.e., the selection of indices is assigned to a point on the Stiefel manifold, represented by $\U$.
\end{itemize}
\end{theorem}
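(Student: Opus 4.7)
The plan is to realize the two selection operators $\SO$ and $\SO_\star$ through pivoting-type constructions and then extract the stated norm bounds from the combinatorial structure of the pivoted factorization. For the first bullet I would apply Businger--Golub column-pivoted QR to $\U^*$, producing a permutation $\Pi$, a unitary $Q\in\mathbb{C}^{m\times m}$, and a leading upper-triangular block $R=[R_{11},R_{12}]$ with $\U^*\Pi=QR$. Setting $\SO$ equal to the first $m$ columns of $\Pi$ gives $\SO^T\U=R_{11}^*Q^*$ and hence $\|(\SO^T\U)^{-1}\|_2=\|R_{11}^{-1}\|_2$; Householder QRCP on the $m\times n$ matrix $\U^*$ costs $O(nm^2)$ flops, settling complexity. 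What remains is to bound $\|R_{11}^{-1}\|_2$, using that $RR^*=\Id_m$ (because $\U$ is orthonormal) and that the pivoting rule enforces $|R_{kk}|^2\ge\sum_{i=k}^{m}|R_{i\ell}|^2$ for every column $\ell\ge k$. Adapting the inductive argument from \cite{drmac-block-jacobi}, I would bound the entries of $R_{11}^{-1}$ column by column, producing the closed form $\sqrt{4^m+6m-1}/3$, with the factor $\sqrt{n-m+1}$ coming from summing over the $n-m$ trailing columns when converting the column-norm bounds on $R_{12}$ into an operator-norm estimate. The rank-deficient refinement \eqref{eq:2m-bound1} then follows by orthonormalizing: factor $\U=Q_{\U}R_{\U}$ with $Q_{\U}$ orthonormal, apply the bound to $Q_{\U}$, and use $\|(\SO^T\U)^{-1}\|_2\le\|R_{\U}^{-1}\|_2\|(\SO^T Q_{\U})^{-1}\|_2=\|(\SO^T Q_{\U})^{-1}\|_2/\sigma_{\min}(\U)$. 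Inequality \eqref{eq:dime-error1} is then immediate from Lemma~\ref{Lemma:deim0} with this $\C$.

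For the second bullet I would invoke \cite{Goreinov19971} and take $\SO_\star$ to be a maximizer of $|\det(\SO^T\U)|$ over all $m$-subset selections. Its defining extremal property is that every entry of $B:=\U(\SO_\star^T\U)^{-1}$ is bounded in modulus by one; since $\SO_\star^T B=\Id_m$, after permuting rows $B$ consists of $\Id_m$ on the selected rows and a block $C\in\mathbb{C}^{(n-m)\times m}$ with $|C_{ij}|\le 1$ on the remaining rows, whence $B^*B=\Id_m+C^*C$ and $\|B\|_2^2=1+\|C\|_2^2\le 1+\|C\|_F^2\le 1+m(n-m)$. The \deim projector $\Prj_\star:=\U(\SO_\star^T\U)^{-1}\SO_\star^T$ is an oblique projector onto $\mathcal{R}(\U)$ that fixes $\U\U^*f$, so $f-\Prj_\star f=(\Id-\Prj_\star)(f-\U\U^*f)$; together with the standard identity $\|\Id-\Prj_\star\|_2=\|\Prj_\star\|_2=\|B\|_2$ for a nontrivial oblique projector, this yields \eqref{eq:dime-error2}.

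For the third bullet I would argue directly from the two constructions. Replacing $\U$ by $\U\Omega$ for unitary $\Omega$ sends $\U^*$ to $\Omega^*\U^*$; column-pivoted QR is driven purely by the $2$-norms of the columns of the successive trailing residuals, and those norms are invariant under unitary left-multiplication, so Businger--Golub produces the same permutation and hence the same $\SO$. Likewise $|\det(\SO^T\U\Omega)|=|\det\Omega|\cdot|\det(\SO^T\U)|=|\det(\SO^T\U)|$, so the maximum-volume maximizer $\SO_\star$ is unchanged as well. The main obstacle I expect is sharpening the column-by-column growth estimate in the first bullet into the clean constant $\sqrt{4^m+6m-1}/3$: a naive induction would only deliver something like $2^{m-1}$, and producing the tighter form requires the careful bookkeeping from \cite{drmac-block-jacobi}, which is presumably why the authors reproduce the full argument rather than merely citing it.
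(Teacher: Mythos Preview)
Your plan is correct and follows essentially the same route as the paper: Businger--Golub QRCP of $\U^*$, the Faddeev--Kublanovskaya--Faddeeva entrywise bound on the inverse of the scaled triangular factor, Goreinov et al.'s maximum-volume selection for $\SO_\star$, and invariance via the unitary-left-multiplication symmetry. A few places where your details diverge from the paper's: (i) the $\sqrt{n-m+1}$ factor in the paper comes not from ``summing over the trailing columns of $R_{12}$'' but from the single inequality $1=\|\R(m,:)\|_2^2\le(n-m+1)|\T_{mm}|^2$, which uses $\R\R^*=\Id_m$ together with the pivoting property $|\T_{mm}|=\max_j|\R_{mj}|$; (ii) for the full-column-rank bound \eqref{eq:2m-bound1} the paper does \emph{not} orthonormalize first but applies QRCP directly to $\U^*$ and uses $|\T_{mm}|\ge\|\R(m,:)\|_2/\sqrt{n-m+1}\ge\sigma_{\min}(\U)/\sqrt{n-m+1}$ --- your orthonormalize-then-select variant also works and has the same complexity, but it is a (slightly) different algorithm; (iii) for \eqref{eq:dime-error2} the paper simply quotes \cite[Lemma~2.1]{Goreinov19971} for $\|(\SO_\star^T\U)^{-1}\|_2\le\sqrt{1+m(n-m)}$ and then applies Lemma~\ref{Lemma:deim0}, whereas you unpack that lemma and use the oblique-projector identity $\|\Id-\Prj_\star\|_2=\|\Prj_\star\|_2$ --- the two are equivalent here since $\|\U(\SO_\star^T\U)^{-1}\|_2=\|(\SO_\star^T\U)^{-1}\|_2$ for orthonormal $\U$; (iv) for the invariance of $\SO$ the paper phrases the argument via the pivoted Cholesky factorization of $\W^*\W$, which is exactly the Gram-matrix version of your ``column norms of trailing residuals are unitarily invariant'' observation.
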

\begin{proof}
	Let $\W=\U^* \in \mathbb{C}^{m\times n}$, and let 
	\begin{equation}\label{eq:WP=QR}
	\W\Pi = \begin{pmatrix} \widehat{\W}_1 & \widehat{\W}_2
	\end{pmatrix} = \mathsf{Q} \R
	= {\mathsf{Q}} \left(\begin{array}{cccc|ccc} 
	* & * & * & * & * & * & * \cr
	0 & * & * & * & * & * & * \cr
	0 & 0 & * & * & * & * & *  \cr
	0 & 0 & 0 & * & * & * & *
	\end{array}\right)
	\end{equation}
	be a column pivoted (rank revealing) QR factorization. We have at our disposal 
	a variety of pivoting strategies that reveal the numerical rank by constructing
	$\R$ in a way to control the condition numbers (explicitly or implicitly) of its 
	leading submatrices.
	
	For instance, the Businger--Golub pivoting \cite{bus-gol-65} at step $i$  first determines a smallest local index $\hat p_i$ of the
	largest (in Euclidean norm) column in the submatrix $(i:m,i:n)$ and swaps globally the columns $i$ and $p_i=i-1+\hat p_i$ in the whole matrix.
	The following scheme illustrates the case with $n=7$, $m=4$, $i=2$, $\hat p_2=3$, and $p_2=4$:
	\textcolor{black}{
	\begin{equation}\label{eq:pivoting-sheme}
	\bordermatrix{
		&       &   i   &       &    p_i     &   & & n  \cr
		& \star & \star & \star & \star   & \star  & \star & \star\cr
	  i &   0   &  \bullet    &  *    & \circledast   & * & * & *\cr
		&   0   &   \bullet   &   *   & \circledast   & * & * & *\cr
      m &   0   &  \bullet    &   *   & \circledast   & * & * & *
	} \stackrel{swap(i,p_i)}{\rightarrow\!\longrightarrow\!\longrightarrow}
	\bordermatrix{
		&   & i &   & p_i &   & & n  \cr
		& \star & \star & \star & \star & \star  & \star & \star\cr
	i   & 0 & \circledast & * & \bullet   & * & * & *\cr
		& 0 & \circledast & * & \bullet   & * & * & *\cr
	m   & 0 & \circledast & * & \bullet   & * & * & *
	} .
	\end{equation}
}
	Then, the QR step maps the $i$--th column \textcolor{black}{in the sub-matrix $(i:m,i:n)$} to $\mathsf{e}_i\mathsf{R}_{ii}$ and keeps all
	the remaining column norms in the submatrix unchanged and bounded by $|\mathsf{R}_{ii}|$.
	(Here $\mathsf{e}_i$ denotes  $i$--th canonical vector of appropriate dimension.)
	The product of all transpositions gives the
	permutation $\Pi$.

	We define the selection operator $\SO$ as the one that collects the columns of $\W$ to build $\widehat{\W}_1$; this implies that $\SO^T\U = \widehat{\W}_1^*$ and we need to
	estimate $\|\widehat{\W}_1^{-1}\|_2$. 
	Partition $\mathsf{R}$ in (\ref{eq:WP=QR}) as $\mathsf{R}=\begin{pmatrix} \mathsf{T} & \mathsf{K}\end{pmatrix}$
	with $m\times m$ upper triangular $\mathsf{T}$. Then $\widehat{W}_{1} = \mathsf{Q}\mathsf{T}$, and the
	problem reduces to bounding $\|\mathsf{T}^{-1}\|_2$.
	As a result of the pivoting (\ref{eq:pivoting-sheme}), the matrix $\mathsf{T}$, as the
	leading $m\times m$ submatrix of $\mathsf{R}$, has a special diagonal dominance structure:
	\begin{equation}
	|\mathsf{T}_{ii}|^2 \geq \sum_{j=i}^k |\mathsf{T}_{jk}|^2,\;\; 1\leq i \leq k \leq m;\;\;\;
	|\mathsf{T}_{mm}| = \max_{j=m:n} |\mathsf{R}_{mj}|.\label{eq:T_ii-dominance}
	\end{equation}
	Further, since $\widehat{\W}\equiv \W\Pi=\mathsf{Q}\mathsf{R}$ and since $\widehat{\W}\widehat{\W}^* = \U^*\U = 
	\mathsf{Q} \R \R^* \mathsf{Q}^* = {\Id}_m$, we conclude that $\R\R^*=\Id_m$, which
	implies that
	\begin{equation}
	1 = \|\mathsf{R}(m,:)\|_2 = |\mathsf{T}_{mm}|^2 + \sum_{j=m+1}^n |\mathsf{R}_{mj}|^2 \leq (n-m + 1) |\mathsf{T}_{mm}|^2 ,
	\end{equation}
and that
	\begin{equation}\label{eq:Tmm}
	\min_{i=1:m} |\mathsf{T}_{ii}| = |\mathsf{T}_{mm}| \geq \frac{1}{\sqrt{n-m + 1}}.
	\end{equation}	
	If we set $\mathsf{D}=\mathrm{diag}(\mathsf{T}_{ii})_{i=1}^m$, $\breve{\mathsf{T}} = \mathsf{D}^{-1}\mathsf{T}$, then $\|\mathsf{T}^{-1}\|_2 \leq \sqrt{n-m+1}\|\breve{\mathsf{T}}^{-1}\|_2$.
		Further, if we assume $\mathsf{U}$ to be just of rank $m$, not necessarily orthonormal, then
		\begin{eqnarray}
		|\mathsf{T}_{mm}| &\geq& \frac{\|\mathsf{R}(m,:)\|_2}{\sqrt{n-m+1}}\geq\frac{\sigma_{\min}(\mathsf{R})}{\sqrt{n-m+1}} = 
		\frac{\sigma_{\min}(\mathsf{U})}{\sqrt{n-m+1}}, \label{eq:Tmm1}\\
		\sigma_{\min}(\mathsf{T}) &\geq& \frac{\sigma_{\min}(\mathsf{U})}{\sqrt{n-m+1}} \frac{1}{\|\breve{\mathsf{T}}^{-1}\|_2} .\label{eq:Tmm2}
		\end{eqnarray}
\textcolor{black}{Since $\SO^T\U=\widehat{\W}_1^*=\T^* \mathsf{Q}^*$, it follows that $\|(\SO^T \U)^{-1}\|_2=\|\T^{-1}\|_2=1/\sigma_{\min}(\T)$.	
	Hence, to prove (\ref{eq:2m-bound}) and (\ref{eq:2m-bound1}) it remains to estimate the norm of $\breve{\mathsf{T}}^{-1} = \mathsf{T}^{-1} \mathsf{D}$.} This can be done using an analysis of 	
	Faddeev, Kublanovskaya and Faddeeva \cite{FKF-1968}, that can also be found in 
	\cite{law-han-74}.  Systematic use of (\ref{eq:T_ii-dominance}) \textcolor{black}{as in \cite[Chapter 6]{law-han-74}}
	yields the following useful inequalities
	\begin{displaymath}
	|\mathsf{T}^{-1} \mathsf{e}_i| \leq \frac{1}{|\mathsf{T}_{ii}|}
	\begin{pmatrix} 2^{i-2}\!\!, & 2^{i-3}\!\!, & \ldots ,& 4, & 2, & 1, & 1, & 0,  & \ldots & 0 \end{pmatrix}^{T},
	\;\;i=2,\ldots, m,
	\end{displaymath}
	where the absolute value and the inequality between vectors
	are understood element--wise. For $i=1$, trivially, we have $\mathsf{T}^{-1}\mathsf{e}_1 = \mathsf{e}_1({1}/{\mathsf{T}_{11}})$,
	and $\breve{\mathsf{T}}^{-1}\mathsf{e}_1 = \mathsf{e}_1$.
	For $i=2,\ldots, m$ we use the relations
	$
	\breve{\mathsf{T}}^{-1} \mathsf{e}_i = \mathsf{T}^{-1}\mathsf{D}\mathsf{e}_i =
	\mathsf{T}^{-1}\mathsf{e}_i {\mathsf{T}_{ii}}
	$
	to conclude
	\begin{displaymath}
	{\displaystyle |\breve{\mathsf{T}}^{-1} \mathsf{e}_i| \leq
		\begin{pmatrix} 2^{i-2}\!\!, & 2^{i-3}\!\!, & \ldots ,& 4, & 2, & 1, & 1, & 0,  & \ldots & 0 \end{pmatrix}^{T}},
	\end{displaymath}
	and thus (\ref{eq:2m-bound}), (\ref{eq:2m-bound1}) follow by (\ref{eq:Tmm}), (\ref{eq:Tmm1}), (\ref{eq:Tmm2}) and	
	$$
	\|\breve{\mathsf{T}}^{-1}\|_2\leq \|\breve{\mathsf{T}}^{-1}\|_F \leq g(m),\;\;
	\mbox{where}\;\; g(m) = \sqrt{m + \sum_{i=2}^m \sum_{j=0}^{i-2} 4^j}=\frac{\sqrt{4^m+6 m - 1}}{3}.
	$$
	
\noindent	If $\mathsf{U}$ is changed to $\mathsf{U}\Omega$ with unitary $\Omega$, then the column pivoted QR is computed with $\widetilde{\mathsf{W}}=\Omega^*\mathsf{U}^*=\Omega^*\mathsf{W}$ on input. The fact that the QR factorization of $\mathsf{W}$ or of $\widetilde{\mathsf{W}}$ is implicitly the Cholesky factorization of the Hermitian semidefinite $\mathsf{H}=\mathsf{W}^*\mathsf{W}=\widetilde{\mathsf{W}}^* \widetilde{\mathsf{W}}$ extends to the pivoted factorizations as well. 
In the first step, obviously, looking for the largest diagonal entry of $\mathsf{H}$
in the pivoted Cholesky factorization is equivalent to looking for the column of $\widetilde{\mathsf{W}}$ (or $\mathsf{W}$) of largest Euclidean length. 
Hence, the pivoting will select the same columns in both cases. After $k$ steps
of annihilations using Householder reflectors with appropriate column interchanges, the
intermediate result is
$
\widetilde{\mathsf{W}}^{(k)} = 
\left(\begin{smallmatrix}
\widetilde{\mathsf{W}}^{(k)}_{[11]} &  \widetilde{\mathsf{W}}^{(k)}_{[12]} \cr
0 & \widetilde{\mathsf{W}}^{(k)}_{[22]}
\end{smallmatrix}\right) ,
$
and it is easily checked that $(\widetilde{\mathsf{W}}^{(k)}_{[22]})^*\widetilde{\mathsf{W}}^{(k)}_{[22]}$ equals the Schur complement at the corresponding step in the pivoted Cholesky factorization of $\mathsf{H}$. Hence, the next step will have the same pivot selection in both processes. 

The existence of $\SO_{\star}$ is based on an elegant argument by Goreinov et al. \cite{Goreinov19971}, who used the concept of matrix volume (the absolute value of the determinant).
The selection $\SO_{\star}$ is defined to be the one that maximizes the volume of
$\SO_{\star}^T \U$ over all ${n\choose m}=\frac{n!}{m!(n-m)!}$ $m\times m$ submatrices of $\U$.
Then, by \cite[Lemma 2.1]{Goreinov19971}, 
\begin{equation}\label{eq:S*}
\|(\SO_{\star}^T\U)^{-1}\|_2\leq\sqrt{1+m(n-m)} .
\end{equation}
Since postmultiplying $\U$ by a unitary $\Omega$ cannot change the volume of any $m\times m$ submatrix of $\U$, the same maximizing volume submatrix will be selected. (Here we assume that \textcolor{black}{a} simple additional condition is imposed to assure unique selection in \textcolor{black}{the} case of several maxima. For instance, among multiple choices, select the one with smallest lexicographically ordered indices.)
The error bounds (\ref{eq:dime-error1}) and  (\ref{eq:dime-error2}) follow by inserting the corresponding bounds (\ref{eq:2m-bound}) and (\ref{eq:S*}) for $\C$ into (\ref{eq:DEIM-error0}).
\end{proof}
	
\begin{remark}
{\em
\textcolor{black}{
According to \cite{FKF-1968}, a slightly better bound $\sqrt{(4^{m-1}+2)/3}$ can be used instead of $g(m)$. It should be emphasized that the $O(2^m)$ upper bound is attained only on a contrived example (the notorious Kahan matrix \cite{kahan-66}) and in practice it can be replaced by $O(m)$.   Also, in the application of \deim, $m$ is assumed small to modest, so $\|\breve{\mathsf{T}}^{-1}\|_2$	can be estimated in $O(m^2)$ or even $O(m)$ time using a suitable condition number estimator; the factor $\sqrt{n-m+1}$ can be replaced by \textcolor{black}{the} actually computed and potentially smaller value $1/|\mathsf{T}_{mm}|$. The deployment of an incremental condition estimator will be particularly important in a randomized sampling version of \dime introduced in \S \ref{SS=dimer}. 
}
}
\end{remark}

More sophisticated rank revealing QR
factorization can further reduce the upper bound on $\C$, but practical experience shows that the pivoting used in Theorem \ref{Theorem:2mbound} works very well. It has been conjectured in \cite{Goreinov19971} that (\ref{eq:S*}) can be replaced with $\|(\SO_{\star}^T\U)^{-1}\|_2\leq\sqrt{n}$, and proved that no bound
smaller than $\sqrt{n}$ can exist in general. 

\begin{remark}  \label{rem:volume}
{\em
While the existence and superiority of $\SO_{\star}$ are clear, its construction is difficult. However, we can use its characterization to understand why the selection operator $\SO$ defined in 
Theorem \ref{Theorem:2mbound} (and the Businger-Golub pivoting in general) usually works very well in practice. The volume of the submatrix selected by $\SO$ equals the volume $\prod_{i=1}^m|\T_{ii}|$ of the upper triangular $\T$, which is the leading $m\times m$ submatrix of the computed $\R$ factor.
On the other hand, the pivoting, by design, at each step tries to produce maximal possible $|\T_{ii}|$; thus it can be interpreted as a greedy volume maximizing scheme. In fact, such an interpretation motivates post-processing to
increase the determinant, e.g. by replacing trailing submatrix $\T(m-1\! :\! m,m-1\! :\! m)$ of $\T$ by better choices, obtained by inspecting the determinants of $2\times 2$ submatrices of $\R(m-1\! :\! m,m+1\! :\! n)$ and moving the corresponding columns upfront. 
}
\end{remark}
   

\begin{example}\label{EX1}
	{\em 
	We illustrate the difference in the values of  $\C=\|(\SO^T \mathsf{U})^{-1}\|_2$
	computed by \deim and \dime using $200$ randomly generated orthonormal matrices of size $10000 \times 100$. The \dime selection not only enjoys better upper bound, but it also in most cases provides smaller actual value of
	$\C$, as seen on Figure \ref{FIG1}. 
	It is interesting to note that all \dime values of
	$\|(\SO^T\U)^{-1}\|_2$ are below $100$, sustaining the conjectured
	bound  $\|(\SO_{\star}^T\U)^{-1}\|_2\leq\sqrt{n}$ for \textcolor{black}{the} volume maximizing scheme. 
	\textcolor{black}{On the other hand, \deim breaches the $\sqrt{n}$ upper bound in most of the trials, indicating less optimal selection with respect to the volume maximizing criterion. In the case of matrices specially constructed  to exhibit large pivot growth, the value of $\C$ in both methods may exceed $\sqrt{n}$, but not with a big factor.}
	We also compare \deim and \dime using a $2048\times 100$ basis for a FitzHugh-Naguma system, analyzed in detail in \S \ref{ex:fn}. The value of $\C$ for \dime is fixed at $2.6878e+01 < \sqrt{2048}$, independent of the orthogonal changes of the basis.

\begin{figure}[hhh]
	\begin{center}
		\includegraphics[width=2.3in,height=1.47in]{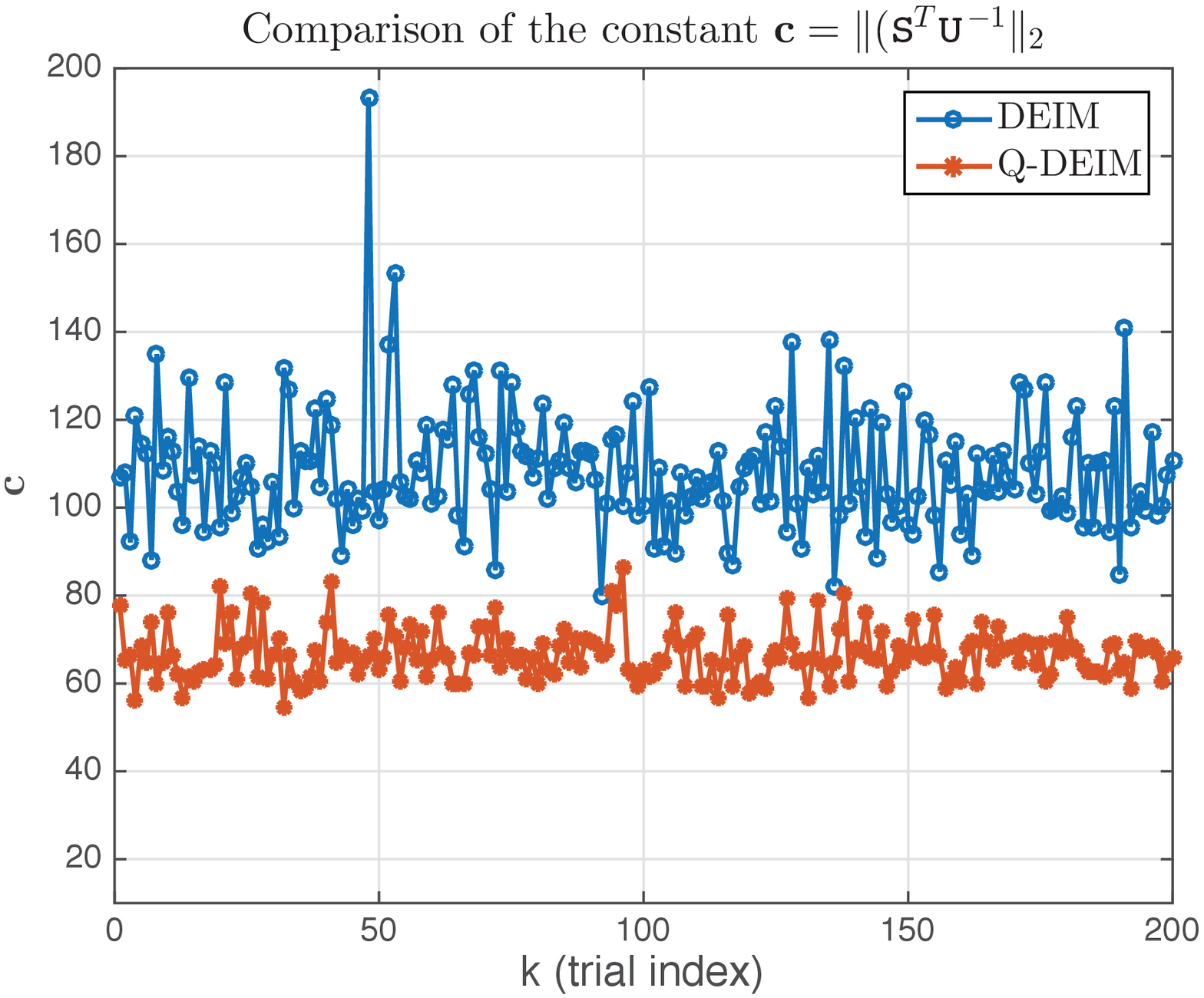} \hspace{1ex}
		\includegraphics[width=2.3in,height=1.47in]{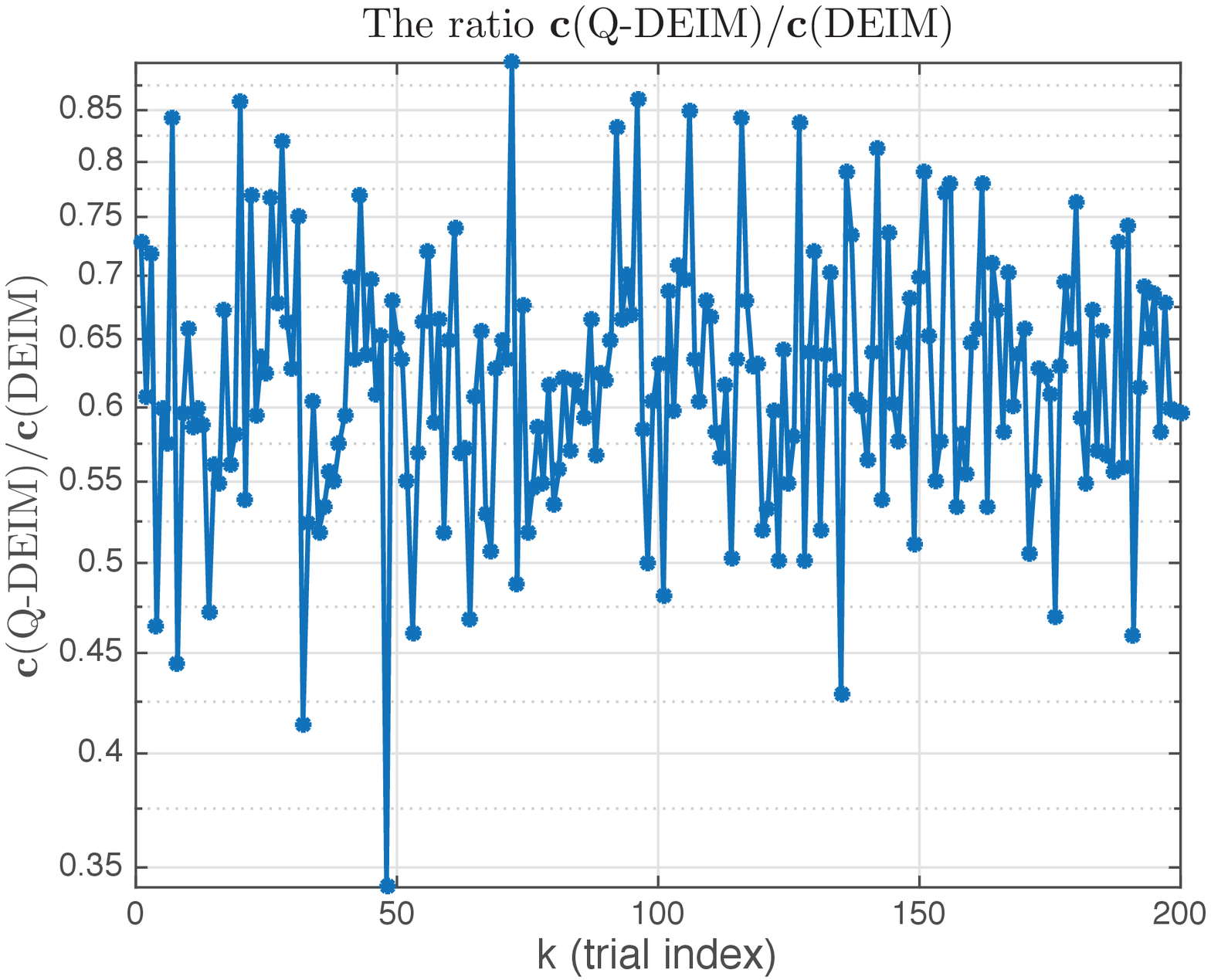}  \\ \vspace{1ex}
	         \includegraphics[width=2.3in,height=1.47in]{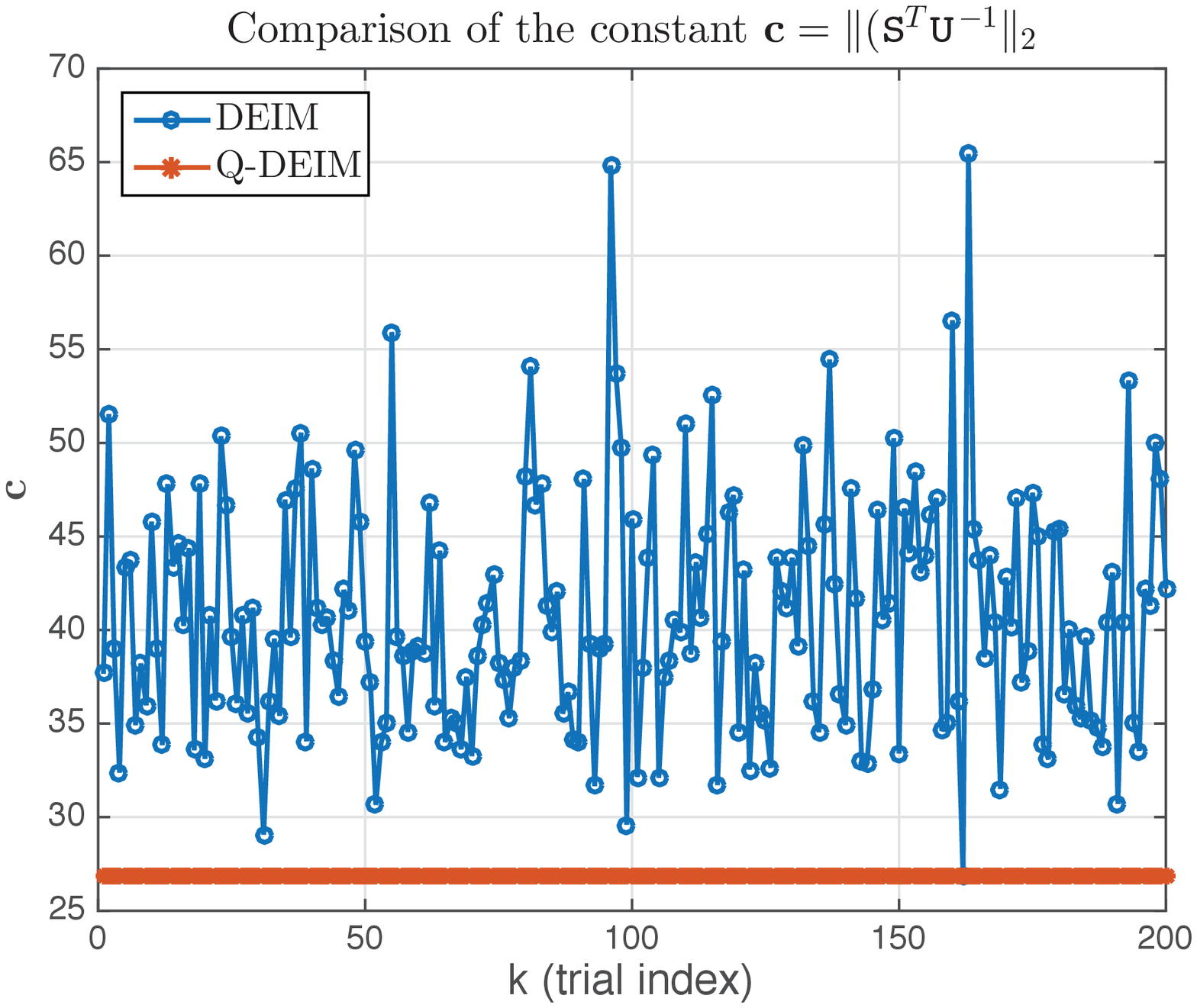}  \hspace{1ex}
		\includegraphics[width=2.3in,height=1.47in]{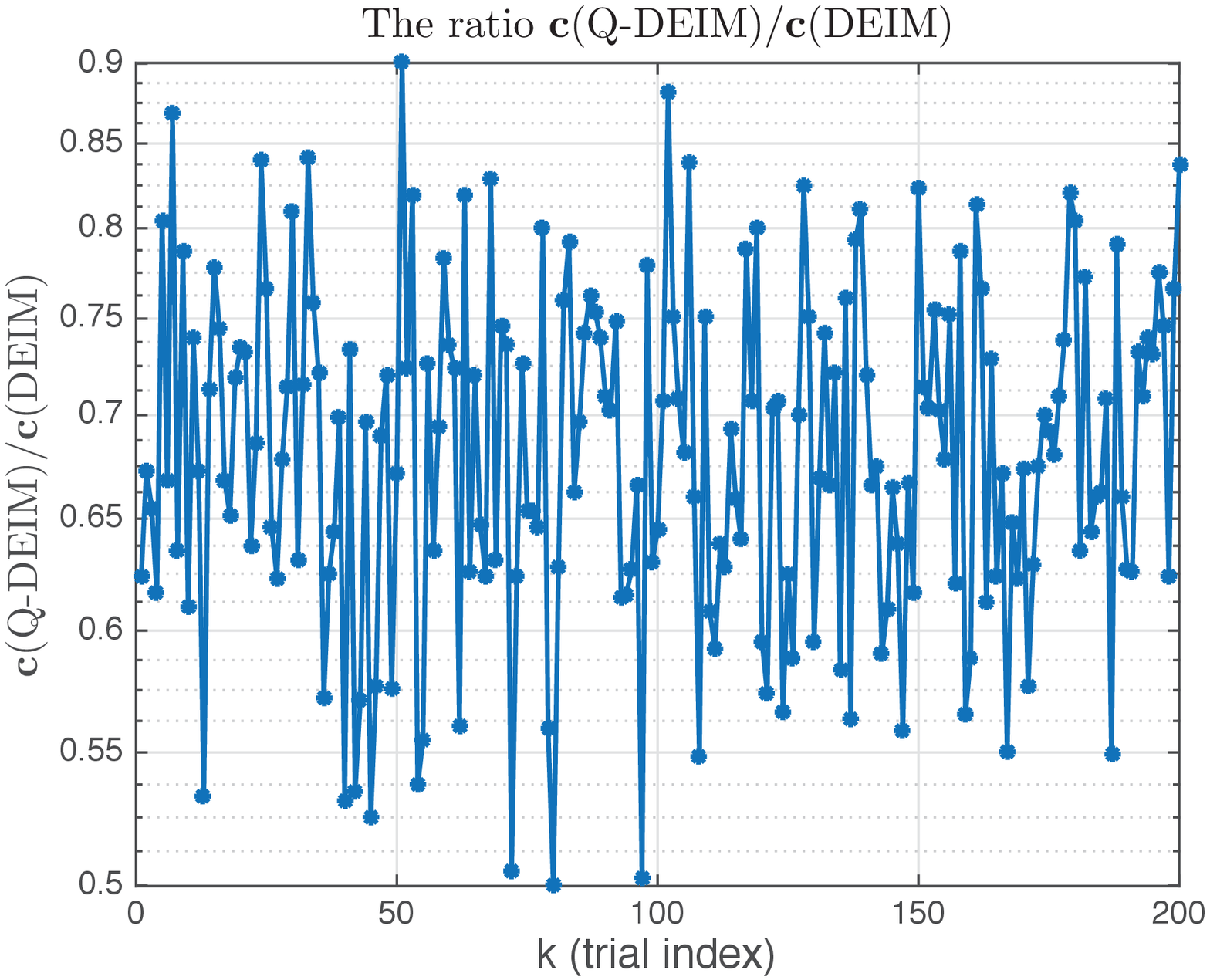}
	\end{center}
	\caption{\label{FIG1} (Example \ref{EX1}) Comparison of the value $\C=\|(\SO^T \mathsf{U})^{-1}\|_2$ in \deim and \emph{\dime}. The first row: The comparison using 200 random orthonormal matrices of size $10000 \times 100$.
	The second row: $200$ random changes of a \deim orthonormal basis $\U$ of size $2048 \times 100$, computed from simulation of the FitzHugh-Naguma system, see \S \ref{ex:fn}. The basis changes are obtained by post-multiplication by random $100\times 100$ real orthogonal matrices (uniformly distributed in the Haar measure).}
\end{figure} 

}	
\end{example}
%


\subsubsection{Implementation details}
		In terms of the row selection from $\U$, the actual computation used in  Theorem \ref{Theorem:2mbound} is an LQ factorization of $\U$ with row pivoting.
		The transposition and  QR with column pivoting is used only for convenience and due to the availability of software implementations.
		A Householder based QR factorization of a fat $m\times n$ matrix runs with complexity $O(m^2 n)$, similar to the complexity of \deim.		
		{LAPACK} \cite{LAPACK} based software tools use \textcolor{black}{the} optimized, BLAS 3 based and robust \cite{drmac-bujanovic-2008} function {\tt xGEQP3}. 
		Other pivoting strategies are possible, such as in {\tt xGEQPX, xGEQPY} in 
		\cite{bischof-q-orti-RRQR-1998-TOMS782}, \cite{bischof-q-orti-RRQR-1998}. On  parallel computing machinery, our new approach uses \textcolor{black}{the} best available QR code with column pivoting; e.g. {\tt PxGEQPF} from {ScaLAPACK} \cite{ScaLAPACK}. 
		
		As an illustration of \textcolor{black}{the} simplicity at which we get high performance computation of a good selection operator, and to make a case for \dime, we briefly describe a MATLAB implementation. 
		Using the notation of Theorem \ref{Theorem:2mbound}, we have
		\begin{equation}\label{eq:dime-matrix}
		\U = \Pi \begin{pmatrix}
		\T^* \cr \K^* \end{pmatrix}\mathsf{Q}^* ,\;\;\SO^T\U = \T^* \mathsf{Q}^*,\;\;\mbox{and thus}\;\; \M\equiv \U(\SO^T\U)^{-1} = \Pi \begin{pmatrix} 
		\Id_m \cr \K^*\T^{-*} 
		\end{pmatrix} .
		\end{equation}

 
\noindent The computation $\T^{-1}\K = \breve{\T}^{-1}({\mathsf D}^{-1}\K)$ by a triangular solver (e.g. the backslash \texttt{\textbackslash} or \texttt{linsolve()} in MATLAB) is numerically stable as $\breve{\T}$ is well conditioned and
$\max_{ij}|({\mathsf D}^{-1}\K)_{ij}|\leq 1$. The explicitly set identity matrix $\Id_m$ in (\ref{eq:dime-matrix})
guarantees that the selected entries of a vector $f$ will be exactly interpolated
when $\M$ is computed as in (\ref{eq:dime-matrix}). If $\M$ is computed as
$\widetilde{\M}=computed(\U/(\SO^T\U))$ (e.g. by MATLAB's slash)
then $\SO^T\widetilde{\M} = \Id_m + (\epsilon_{ij})_{m\times m}$,
with all $\epsilon_{ij}$ at roundoff level. If $s_1,\ldots, s_m$ are the interpolation indices selected by $\SO$, then checking the interpolation for $f\in\mathbb{{R}}^n$ yields
$$
(\SO^T \widetilde{\M} \SO^T f)_i = f_{s_i}(1+\epsilon_{ii}) + \sum_{j\neq i}
f_{s_j}\epsilon_{ij} ,\;\;i=1,\ldots, m,
$$
revealing an undesirable  pollution of $f_{s_i}$, in particular if $\max_{s_j\neq s_i}|f_{s_j}|\gg |f_{s_i}|$.  

\vspace{-2mm} 
\begin{lstlisting}
function [ S, M ] = q_deim( U ) ;
% Input  : U n-by-m with orthonormal columns
% Output : S selection of m row indices with guaranteed upper bound
%          norm(inv(U(S,:))) <= sqrt(n-m+1) * O(2^m).
%        : M the matrix U*inv(U(S,:)); 
%          The Q-DEIM projection of an n-by-1 vector f is M*f(S).
% Coded by Zlatko Drmac, April 2015.
[n,m] = size(U) ; 
if nargout == 1 
  [~,~,P] = qr(U','vector') ; S = P(1:m)  ;
else
  [Q,R,P] = qr(U','vector') ; S = P(1:m)  ; 
  M = [eye(m) ; (R(:,1:m)\R(:,m+1:n))']   ;
  Pinverse(P) = 1 : n ; M = M(Pinverse,:) ; 
end
end
\end{lstlisting} 

\subsubsection{\deim and LU with partial pivoting}\label{SS:DEIM-LUPP}
It has been known, at least to experts, that \deim is a variation of Gaussian elimination;  Sorensen \textcolor{black}{\cite{dan,dansiam10}} called it a \emph{pivoted LU without replacement}. 
Recently, \cite{antil2013two} proposed to replace Step 7 of Algorithm \ref{ALG:DEIM}, $\U_j = \begin{pmatrix} \U_{j-1} & u_j \end{pmatrix}$,
by $\widehat{\U}_j = (\begin{matrix}\widehat{\U}_{j-1} & \widehat{r} \end{matrix})$ (To make the distinction clear, we denote the new variable by $\widehat{\U}$ and we use $\widehat{\U}_j$ 
to denote the matrix $\widehat{\U}(:,1\! :\! j)$  at the end of the $j$th step.). In other words, the basis $\widehat\U_{j-1}$ is updated by adding the current residual vector. Assume that at step $j$, 
$\widehat{\U}_{j-1}=\U_{j-1} \G_{j-1}$, where $\G_{j-1}$ is unit upper triangular and that both computations have the same selection $\SO_{j-1}$ (this holds at $j=2$). Then the residual $\widehat{r}=u_j - \widehat{\U}_{j-1}(\SO_{j-1}^T\widehat{\U}_{j-1})^{-1}\SO_{j-1}^T u_j$ is
easily shown to be the same as $r$ in Algorithm \ref{ALG:DEIM}. Hence, the updated $\SO_j$ will be the same, and $\widehat{\U}_{j}\equiv (\begin{matrix} \widehat{\U}_{j-1} & \widehat{r}\end{matrix})=\begin{pmatrix} \U_{j-1} & u_j \end{pmatrix} \G_{j}$, with an updated unit upper triangular $\G_{j}$.

 Note that $\SO_j^T \widehat{\U}_j=\begin{pmatrix} \SO_{j-1} & e_{p_j} \end{pmatrix}^T (\begin{matrix} \widehat{\U}_{j-1} & \widehat{r} \end{matrix})$ is  lower  triangular at each step: The upper triangular part of the last column of this product is $\SO_{j-1}^T \widehat{r}$, which is zero by the definition of $\widehat{r}$. Therefore, $\SO^T \widehat{\U} =\SO^T \U \G = \Z$ with a lower trapezoidal $\Z$ and a unit upper triangular $\G$. Hence, \deim (with replacement) is the same as a row pivoted LU decomposition. This connection  might help understanding why \deim behaves much better than the theoretical upper bound would suggest.  Similar to the discussion in Remark \ref{rem:volume}, \deim is applying a locally greedy search to maximize the volume of $\SO^T \U$: Because $\widehat{\U}=\U \G$ with a unit upper triangular $\G$, $\mathrm{det}(
\SO^T\widehat{\U}) = \mathrm{det}(\U)=\prod_{j=1}^m \Z_{jj}$, and the
$\Z_{jj}$'s are results of a greedy search for maxima.


The modified update yielding $\widehat{\U}$ reduces the computational complexity of $\widehat\U(\SO^T \widehat\U)^{-1}$ to  $O(nm^2)$ down from $O(nm^2)$+ $O(m^3)$ because the LU decomposition of $\SO^T \widehat\U$ is no longer necessary. However, the claim in \cite{antil2013two} that  Algorithm \ref{ALG:DEIM} contains  
$O(m^4)$   complexity  is misleading because a practical implementation of Step 5,  $\SO_{j-1}^T \U_j z = \SO_{j-1}^T u_j$,  will not 
compute the LU decomposition of $\SO_{j-1}^T \U_{j-1}$ from scratch in each step. Instead,  
 it will exploit the fact that, in the $j^{\rm th}$ step, $\SO_{j-1}^T \U_{j-1}$ is changed by appending only a new row and a column and will \emph{update} the LU decomposition with complexity $O(j^2)$; thus making the total complexity of Algorithm 
 \ref{ALG:DEIM} of $O(nm^2)$+ $O(m^3)$. 
   

This connection also suggests  using a rank revealing LU decomposition of $\U$ with complete pivoting and taking
the indices of the first $m$ pivoted rows as the \deim indices. The bound on $\C$ changes only by a factor
originating from the inverse of the unit upper triangular LU factor, which, as a consequence of complete pivoting, is bounded by $O(2^m)$. On the other hand, in the case of partial pivoting like in \deim, the bound  
is rather pessimistic. However, as expected, \deim behaves much better in practice, as partial LU hardly exhibits the worst case growth scenario. 
\textcolor{black}{It is an interesting challenge to determine what
orthonormal basis of the range of $\U$ is best for the performances
of \deim.}

\subsection{Model Reduction Examples}\label{S=examples}
In this section, we test the performance of the new selection procedure on {two} model reduction benchmark problems.

\subsubsection{The FitzHugh-Naguma (F--N) System}  \label{ex:fn}
The F--N system, a simplified version of the Hodgkin--Huxley model,  arises in 
modeling the activation and deactivation dynamics of a spiking neuron. This example is borrowed from
\cite{DEIM} and we follow their description of the model, including their notation and  parameter selection. 
Let $v$ and $w$ denote, respectively, the voltage and recovery of voltage. Also, let $x \in [0,L]$ and $t\geq0$. Then,
the underlying dynamics are described by the coupled  nonlinear PDEs
\begin{eqnarray}
\varepsilon v_t(x,t) & = &  \varepsilon^2 v_{xx}(x,t) + f(v(x,t))-w(x,t) + c \\
w_t(x,t) & = & b v(x,t) - \gamma w(x,t) +c
\end{eqnarray}
with the nonlinearity appearing as $f(v) = v(v-0.1)(1-v)$ and 
with the initial and boundary conditions 
$$
\begin{array}{ccc}
v(x,0) = 0, & w(x,0) =0, &  x  \in [0,L],\\
v_x(0,t)  =  -i_0(t), & v_x(L,t)  =  0, & t  \geq  0,
\end{array}
$$
where the model  parameters are chosen as $L=1$, $\varepsilon = 0.015$, $b=0.5$, $\gamma = 2$, $c=0.05$ and the 
stimulus $i_0(t) = 50000t^3 e^{-15t}$. A finite difference discretization leads to a system of the form {(\ref{nlfom})} with system dimension $n=2048$. A time-domain simulation with equally spaced points for $t=[0,8]$ leads to $N=100$ state and nonlinear snapshots. Following \cite{DEIM}, we choose $r=m=5$ and perform model reduction using both  \deim selection procedures. We simulate  both reduced models, collect reduced-order snapshots $\Xs_{\deim}$ and $\Xs_{\dime}$. Then, to measure the error in model reduction, we lift these two snapshots back  to the original dimension, i.e., we compute $V\Xs_{\deim}$ and  $V\Xs_{\dime}$ where 
$V$ is the POD basis for model reduction, and measure their distance (in the relative Frobenius-norm) from the original snapshot $\Xs$. Let 
$\epsilon_{\deim} = \frac{\|\Xs - V \Xs_{\deim}\|_F}{\|\Xs\|_F} $ and 
$\epsilon_{\dime} = \frac{\|\Xs - V \Xs_{\dime}\|_F}{\|\Xs\|_F} $
denote the resulting errors. For $r=m=5$, we obtain 
$\epsilon_{\deim} = 3.500673 \times 10^{-2}$ and 
$\epsilon_\dime = 3.467286 \times 10^{-2}$.

To illustrate that this is the \textcolor{black}{usual behavior}, i.e., the \dime selection performs as well as the original \deim selection,   we test other $r=m$ values as well:
$$
\begin{array}{ccc}
r=m=4: & \epsilon_{\deim} = 4.291788 \times 10^{-2}, & \epsilon_{\dime} =3.446203 \times 10^{-2}   \\
r=m=6: & \epsilon_{\deim} = 3.300680\times 10^{-2}, & \epsilon_{\dime} = 3.260097\times 10^{-2}   \\
r=m=7: & \epsilon_{\deim} = 2.998979 \times 10^{-2}, & \epsilon_{\dime} =  3.010827 \times 10^{-2}.
\end{array}
$$ 
To better illustrate the comparison, we measure, in relative $2$-norm, how accurately each entry of $x(t)$ is reconstructed with \deim and \dime. Therefore, we measure the relative $2$-norm distance between the $k^{\rm th}$ rows  \textcolor{black}{of}
the original snapshot matrix $\Xs$ and those of the reconstructed ones $V\Xs_{\deim}$ and 
$V\Xs_{\dime}$. The results are shown in Figure \ref{fig:4646}\textcolor{blue}{,} once more\textcolor{blue}{,} illustrating that both procedures perform equally well.
\begin{figure}
	\begin{center}
		\includegraphics[scale=0.7]{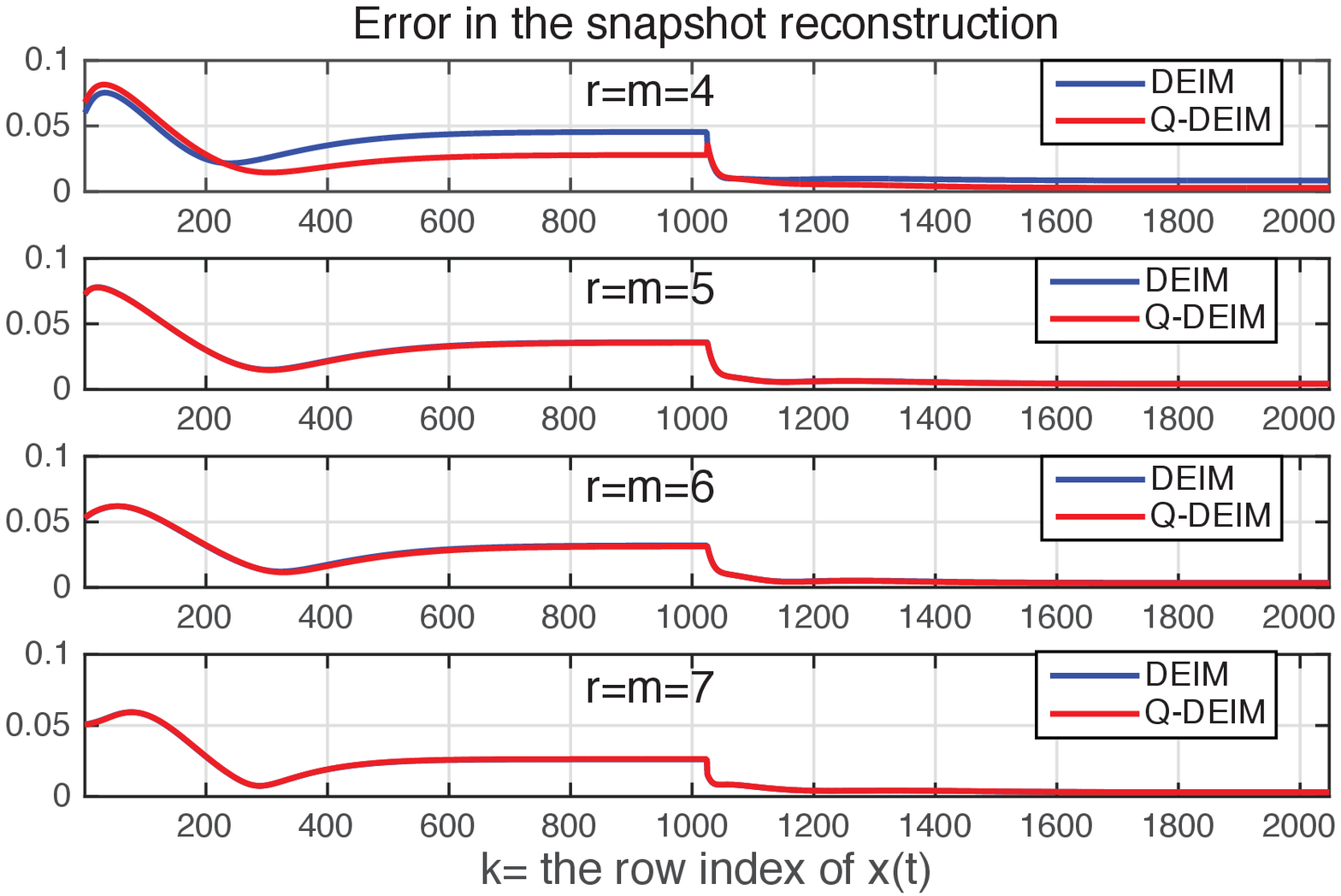} 
	\end{center}
	\caption{Example \ref{ex:fn}. \label{fig:4646} Comparison of the relative error in the snapshot reconstruction for the F-N model for different  $r$ and $m$ values. The horizontal axis variable $k$ corresponds to the $k^{\rm rh}$ row of $x(t)$ for which the relative error is computed.
	}
\end{figure} 

\subsubsection{Nonlinear RC Model} \label{ex:rc}
This is a model of  nonlinear RC-ladder circuit  \cite{Chen1999,bai2006projection,benner2015two}\footnote{The model can be downloaded from  Max Planck Institute Model Reduction Wiki page at  {\tt http://morwiki.mpi-magdeburg.mpg.de/morwiki/index.php/Nonlinear$_{-}$RC$_{-}$Ladder}.}, another benchmark example for model reduction.
\textcolor{black}{The} nonlinearity results from resistors that are in a parallel connection with diodes; the diode I-V characteristics have the nonlinearity $i_D = e^{40 v_D}-1$ 
where $i_D$ is the current through the diode and $v_D$ is the voltage across it. 
The input is the current source entering at node $1$. 

We take $n=1000$, i.e., connect $1000$ such ladders and excite the system using 
the exponential forcing  $\mathbf{g}(t) = e^{-t}$. A numerical simulation over $t=[0,7]$ seconds results in $1425$ POD snapshots $x_i$ and $1425$ nonlinear (\deim) snapshots $\f(x_i)$. Decay of the POD and \deim singular values are shown in the left-hand side plot of Figure \ref{fig:rlc1}. Based on this decay, we pick $r=m=10$ and apply POD with both \deim and \dime selections. In this example, the voltage at node $1$, i.e., the first component of $x(t)$ \textcolor{black}{(denoted by $\xi_1(t)$)}, is the quantity of interest and we measure how both reduced models approximate \textcolor{black}{$\xi_1(t)$}.  As shown on Figure \ref{fig:rlc1},
both methods perform extremely well; the reduced-model quantities are virtually indistinguishable from the original.

As in the previous example, we compute the reconstruction errors due to \deim and \dime, and obtain $\epsilon_{\deim} =8.603826 \times 10^{-3} $ and 
$\epsilon_{\dime} = 6.07172 \times 10^{-3}$; once again high accuracy for both models. The reconstruction errors in \textcolor{black}{$\xi_1(t)$}  due to \deim and \dime are
$1.28183\times 10^{-4} $  and $7.783045 \times 10^{-5}$ respectively. Once we increase
$r=m=10$ to $r=m=20$, both reduced models become even more accurate with 
$\epsilon_{\deim} = 1.970500 \times 10^{-4}$ and   $\epsilon_{\dime} =  1.931018 \times 10^{-4}$. The reconstruction errors in \textcolor{black}{$\xi_1(t)$}  are now 
$3.209967\times 10^{-5} $ for \deim  and $3.238549 \times 10^{-5}$ for \dime.

\begin{figure}[hhh]
	\begin{center}
		\includegraphics[width=2.3in,height=2.0in]{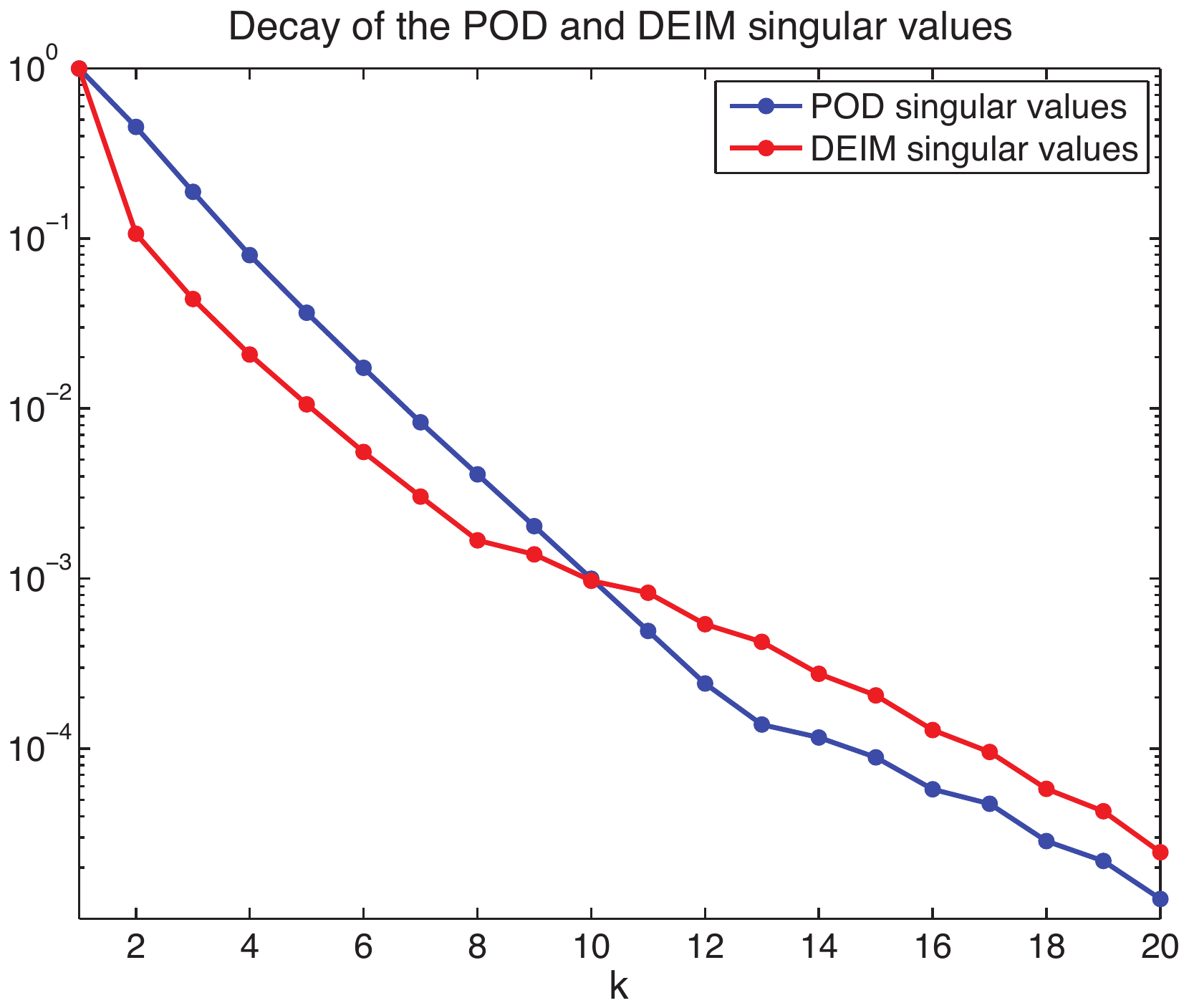}
		\includegraphics[width=2.3in,height=2.0in]{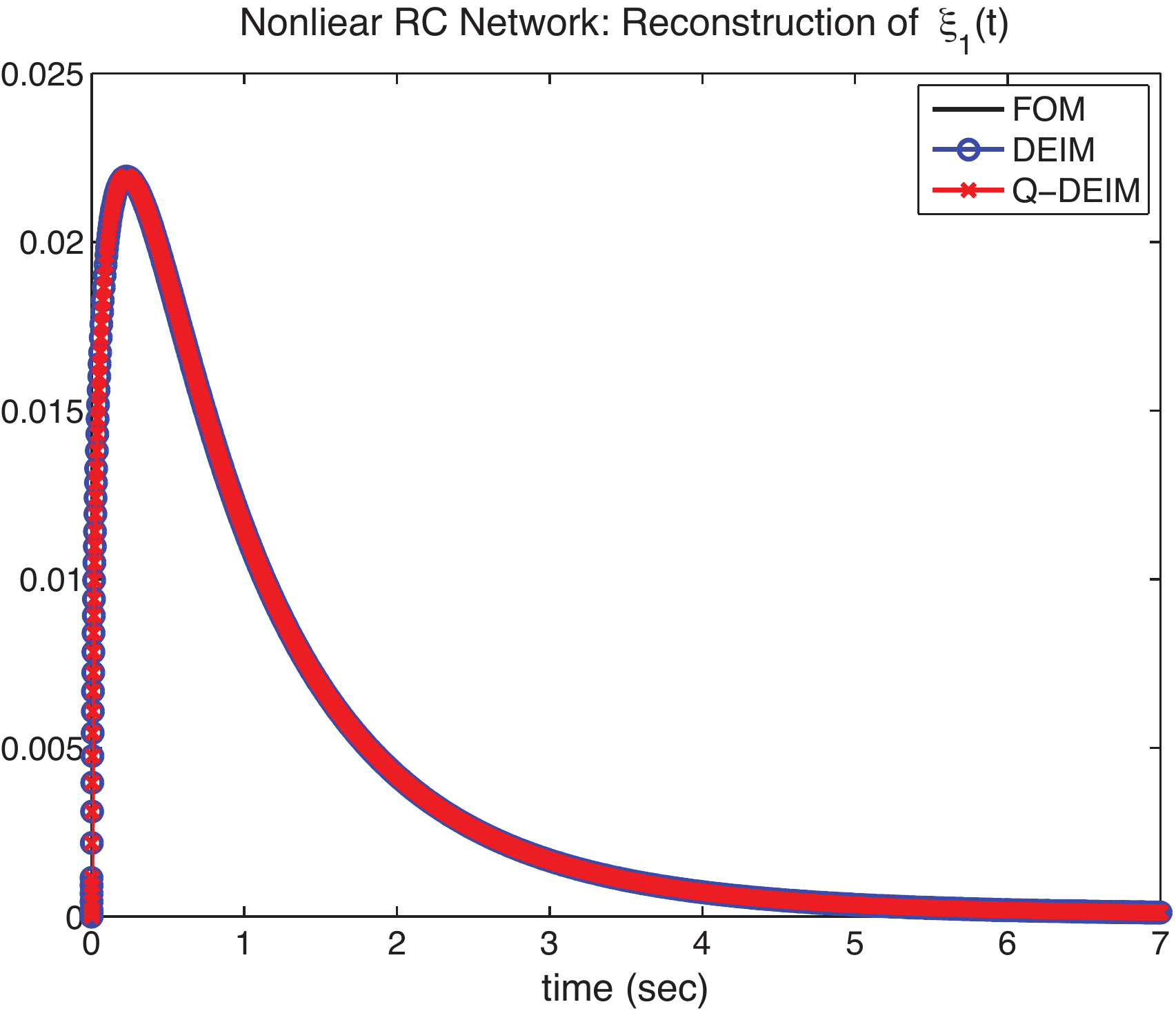}
	\end{center}
	
	\caption{ \label{fig:rlc1} (Example \ref{ex:rc}) The left-plot shows the decay of the POD and \deim singular values. The right-plot shows the reconstruction accuracy of $\xi_1(t)$ by both selection methods.}
\end{figure} 	

To further illustrate the dependence of the error on the reduced dimension, we test both \deim and \dime for $1\leq r=m \leq 20$. The results  depicted in Figure \ref{fig:vary_r_m} below confirm the earlier observations.
\begin{figure}[hhh]
	\begin{center}
		\includegraphics[scale=0.4]{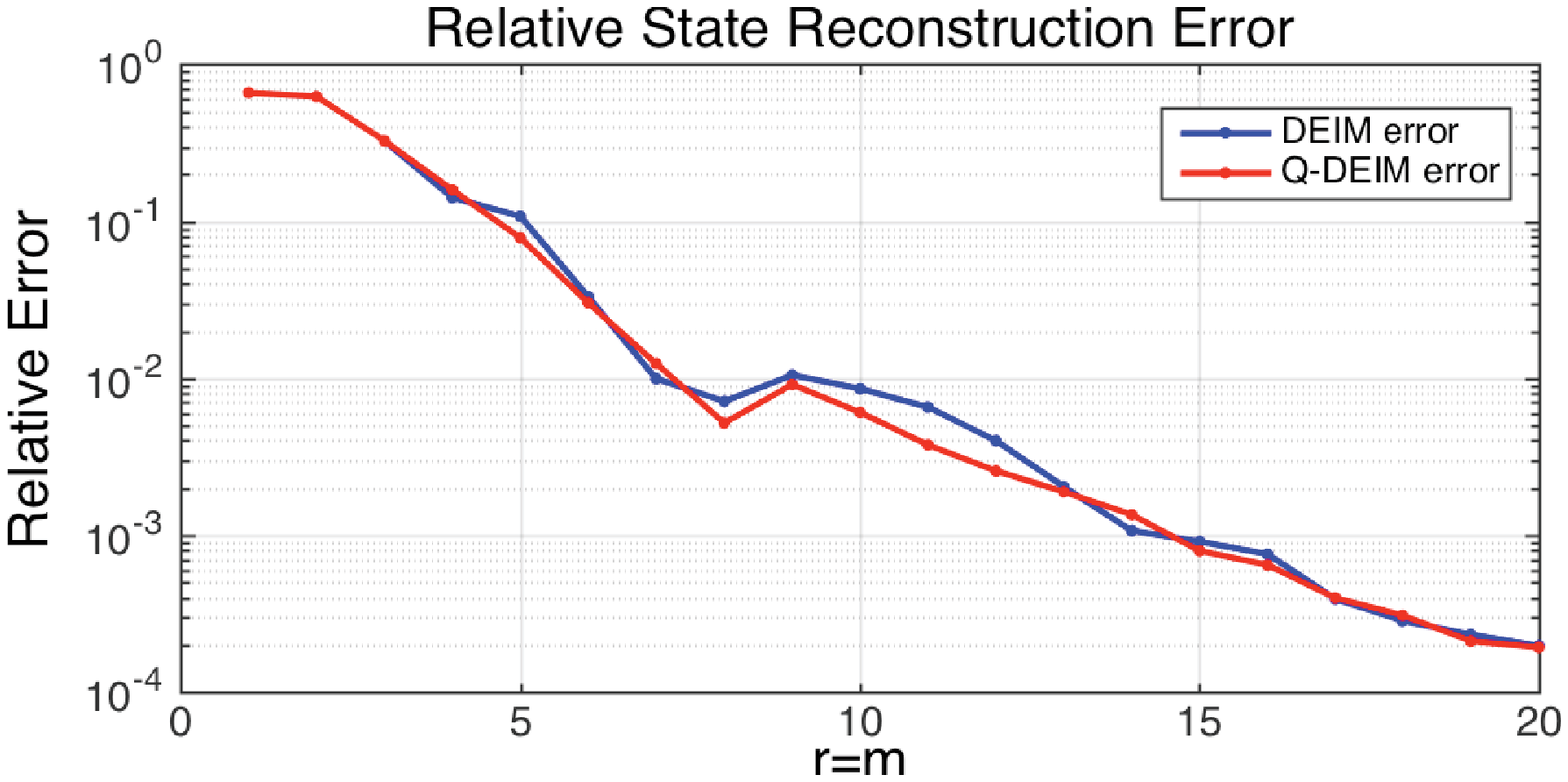}
\end{center}
	
	\caption{ \label{fig:vary_r_m} (Example \ref{ex:rc}) Relative errors $\epsilon_{\deim}$ and $\epsilon_{\dime}$ for 
	varying $r$ and $m$ values.}
\end{figure} 	

The two other excitation selections suggested for this model are 
$\mathbf{g}(t) = \sin(2 \pi 50t)$ and   $\mathbf{g}(t) = \sin(2 \pi 1000t)$ \cite{condon2004model}. For these two inputs, for all the $r$ and $m$  combinations we have tried, \deim and \dime have returned exactly the same selection matrix $\SO$; resulting in exactly same reduced model that is very accurate, with relative errors of $O(10^{-3})$ even with $r=m=5$. For brevity, we omit the resulting figures.

\section{Using restricted/randomized basis information}\label{SS=dimer}
The framework introduced in \S \ref{S=dime} \textcolor{black}{allows} various modifications. Here 
we describe one, introducing \dimer, a version of \dime that works only on a random selection of the rows of $\U$. This 
introduces the techniques of randomized sampling in the theory and practice of \deim, but it also provides a new approach to sampling from orthonormal matrices \cite{Ipsen:2014:ECS}.

If the dimension $n$ is large, it would be advantageous to determine a good
selection operator $\SO$ in a way to reduce the $O(m^2 n)$ factor in the complexity of
the algorithm.  From the proof of Theorem \ref{Theorem:2mbound} it follows that, for the quality of the \deim projection, we
only need to ensure that the upper triangular matrix $\T=\R(1\! :\! m,1\! :\! m)$ has small
inverse. But $\T$ is the pivoted QR triangular factor of certain columns of $\W$,
and our initial task is to find their indices; we have no interest in the QR
factorization as such.  This immediately suggests that we may attempt to find such indices using only a small selection of the columns of $\W$ (i.e. of the rows of $\U$).\footnote{In another situation, e.g. in a case of gappy POD approximation,  we may want to avoid some rows of $\U$ because e.g. they correspond to spatial coordinates with corrupted or missing information. 
	}

The following scheme depicts the main idea: suppose we randomly sample $k\geq m$ 
columns of $\W$ (marked by $\begin{smallmatrix}\uparrow\cr \ast\end{smallmatrix}$) and assemble them in a local working $m\times k$ array $\Lw$. 
{\small
	\begin{equation}\label{eq:dimer1}
	\bordermatrix{
		& \textcolor{blue}{\Uparrow}   &  \uparrow   &   &      & \uparrow  &    & \uparrow   &   & \uparrow   &  \textcolor{blue}{\Uparrow} &   &   & \textcolor{blue}{\Uparrow}  &   & \uparrow  &   & \uparrow  &   &     \textcolor{blue}{\Uparrow} \cr
		&  \textcolor{blue}{\star} &  \ast  & . &   .  & \ast &  . & \ast  & . &  \ast & \textcolor{blue}{\star} & . & . & \textcolor{blue}{\star} & . & \ast & . & \ast & .  & \textcolor{blue}{\star} \cr
		&  \textcolor{blue}{\star} &  \ast  & . &   .  & \ast &  . & \ast  & . &  \ast & \textcolor{blue}{\star} & . & . & \textcolor{blue}{\star} & . & \ast & . & \ast & .  & \textcolor{blue}{\star} \cr
		&  \textcolor{blue}{\star} &  \ast  & . &   .  & \ast &  . & \ast  & . &  \ast & \textcolor{blue}{\star} & . & . & \textcolor{blue}{\star} & . & \ast & . & \ast & .  & \textcolor{blue}{\star} \cr
		&  \textcolor{blue}{\star} &  \ast  & . &   .  & \ast &  . & \ast  & . &  \ast & \textcolor{blue}{\star} & . & . & \textcolor{blue}{\star} & . & \ast & . & \ast & .  & \textcolor{blue}{\star} \cr
	}	
	\mapsto \overbrace{\begin{pmatrix}
		*  &  *  &  *  &  *  &  *  & * \cr
		*  &  *  &  *  &  *  &  *  & * \cr
		*  &  *  &  *  &  *  &  *  & * \cr
		*  &  *  &  *  &  *  &  *  & * \cr
		\end{pmatrix}}^{{\displaystyle \Lw}}
	\end{equation}
}
Then we attempt QR with column pivoting on $\Lw$ with a built-in  Incremental Condition Estimator (ICE) that at any step $j$ very efficiently estimates the norm of the inverse of the thus far constructed part of the triangular factor $\Lw(1\! :\! j,1\!:\! j)$.\footnote{Here we assume that the triangular factor overwrites the corresponding part of the array $\Lw$.} 

This is done as follows: Suppose the first $j-1$ steps have been successful and the computed triangular factor ($\left(\begin{smallmatrix} \textcolor{green}{\ast} & \textcolor{green}{\ast} \cr 0 &  \textcolor{green}{\ast}\end{smallmatrix}\right)$ in (\ref{eq:dimer2})) is well conditioned. The corresponding global column indices (in $\W$) that
correspond to these columns in $\Lw$ are stored and removed from the active set of indices from which random selection is made. All Householder reflectors used are accumulated in a $m\times m$ matrix $\Q$.

In the $j$th step, a new pivot column is selected and
swapped to be the $j$th one, and the single Householder reflector is applied only to that
column to annihilate its positions $j+1$ to $m$, and to compute the $(j,j)$th position,
($\textcolor{red}{\circledast}$ in (\ref{eq:dimer2})). At this moment we have all the ingredients to compute the value $\gamma_j=\|\Lw(1\! :\! j,1\!:\! j)^{-1}\|$ (marked as $\left(\begin{smallmatrix} \textcolor{green}{\ast} & \textcolor{green}{\ast} & \textcolor{red}{\times} \cr 0 &  \textcolor{green}{\ast} & \textcolor{red}{\times} \cr 0 & 0 & \textcolor{red}{\circledast}\end{smallmatrix}\right)$ in (\ref{eq:dimer2})).
If $\gamma_j$ is below given threshold, the factorization continues by accepting the pivotal column, completing the $j$th step and looking for the next pivot. If not, it means that the $(j,j)$th position,
($\textcolor{red}{\circledast}$ in (\ref{eq:dimer2})) is too small, and, due to pivoting,  that all entries in the active submatrix of $\Lw$ ($\textcolor{red}{\odot}$ in (\ref{eq:dimer2})) are also small. In that case, the columns $j$ to $k$ in $\Lw$ are useless for our purposes and we discard them and draw new $k-j+1$ columns from the active set of columns of $\W$ ($\begin{smallmatrix}\textcolor{blue}{\Uparrow}\cr\textcolor{blue}{\star}\end{smallmatrix}$ in (\ref{eq:dimer1})).
Before using newly selected columns as a part of $\Lw$, we need to update them by applying unitary matrix $\Q$ which contains accumulated all previous transformations. The resulting new columns in $\Lw$ ($\textcolor{magenta}{\star}$ in (\ref{eq:dimer2})) can now participate in pivoting. 

{\small
\begin{equation}\label{eq:dimer2}
\begin{pmatrix}
	\textcolor{green}{\ast} & \textcolor{green}{\ast} &  \textcolor{red}{\times}  &   \times  &   \times  & \times \cr
	 0   & \textcolor{green}{\ast} &  \textcolor{red}{\times}   &   \times  &   \times  &  \times \cr
	 0   &   0  &  \textcolor{red}{\circledast}  &   \textcolor{red}{\odot}  &   \textcolor{red}{\odot}  &  \textcolor{red}{\odot} \cr
	 0   &   0  &  0  &   \textcolor{red}{\odot}  &   \textcolor{red}{\odot} &   \textcolor{red}{\odot} 
\end{pmatrix}
\rightsquigarrow
\begin{pmatrix}
	\textcolor{green}{\ast} & \textcolor{green}{\ast} &  \textcolor{magenta}{\star}  &   \textcolor{magenta}{\star}  &   \textcolor{magenta}{\star}  & \textcolor{magenta}{\star} \cr
	0   & \textcolor{green}{\ast} &   \textcolor{magenta}{\star}   &   \textcolor{magenta}{\star}  &   \textcolor{magenta}{\star}  &  \textcolor{magenta}{\star} \cr
	0   &   0  &  \textcolor{magenta}{\star}  &   \textcolor{magenta}{\star}  &   \textcolor{magenta}{\star}  &  \textcolor{magenta}{\star} \cr
	0   &   0  &  \textcolor{magenta}{\star}  &   \textcolor{magenta}{\star}  &   \textcolor{magenta}{\star} &   \textcolor{magenta}{\star} 
\end{pmatrix}
\rightsquigarrow
\begin{pmatrix}
	\textcolor{green}{\ast} & \textcolor{green}{\ast} &  \textcolor{green}{\ast}  &   \textcolor{magenta}{\star}  &   \textcolor{magenta}{\star}  & \textcolor{magenta}{\star} \cr
	0   & \textcolor{green}{\ast} &   \textcolor{green}{\ast}   &   \textcolor{magenta}{\star}  &   \textcolor{magenta}{\star}  &  \textcolor{magenta}{\star} \cr
	0   &   0  &  \textcolor{green}{\ast}  &   \textcolor{magenta}{\star}  &   \textcolor{magenta}{\star}  &  \textcolor{magenta}{\star} \cr
	0   &   0  &  0  &   \textcolor{magenta}{\star}  &   \textcolor{magenta}{\star} &   \textcolor{magenta}{\star} 
\end{pmatrix}
\end{equation}
}
At this point we may simply choose  to continue with the factorization, search for the next pivot column, swap it to the $j$th position, test $\gamma_j$ against the threshold and accept if it passes the test, as illustrated in (\ref{eq:dimer2}). Another option is to discard all previous pivoting and start a completely new one by determining the largest column for the first position, and proceed with a completely new pivoting process on the updated contents of $\Lw$. 
The value of $k$ is not necessarily fixed and may change dynamically with a safety device to prevent failure.
With proper data structure, one can develop a detailed algorithm and an efficient software implementation. For the sake of brevity, we omit the details. 
However, we provide one illustrative example. 
\begin{example}\label{EX:dimer}
	{\em
		Let $\f(t;\mu) = 10 e^{-\mu t}(\cos(4\mu t) + \sin(4\mu t))$, $1\leq t\leq 6$, $0\leq\mu\leq \pi$. Take $40$ uniformly sampled values of $\mu$ and compute the snapshots over the discretized $t$--domain at $n=10000$ uniformly spaced nodes.
		The best low rank approximation of the sampled $10000\times 40$	returned $\U$
		with $m=34$ columns. This indicates that the POD basis has captured  the function's behavior. 
		We allowed \dimer to process only $k=m$ columns in the work array $\Lw$, and set the upper bound for $\C$ at\footnote{This \emph{ad hoc} choice is motivated by the structure of the upper bound for $\C$, as in the proof of Theorem \ref{Theorem:2mbound}. Note that we use $\sqrt{m}$ instead of the worst case theoretical $O(2^m)$ bound.} $\sqrt{m}\sqrt{n-m+1}$. Column index drawing is done simply: $\ell$ ``random" indices are taken as the $\ell$ leading indices of randomly permuted active set. We stress here that the purpose of this example is to illustrate the idea and its potential, to
		motivate further study of the randomized sampling approach to \deim projection.
		
		After processing $113$ rows of $\U$ (out of $10000$), \dimer selected a submatrix with $\C\approx 181.45$; \deim processed the whole matrix $\U$
		and returned $\C\approx 79.13$. To test how well the two methods approximate $\f$, we compute its value at $200$ points in the $\mu$--interval: for each $\mu_j$ the function is evaluated over the $t$--grid  giving $f_{\mu_j}\in\mathbb{R}^n$. The same is done with \deim and \dime projections giving $f_{\mu_j}^{\deim}$ and $f_{\mu_j}^{\dime}$, respectively. 
		The results of a comparison are depicted in Figure \ref{FIG1dimer}.
		\begin{figure}[hhh]
			\begin{center}
		\includegraphics[width=2.3in,height=2.0in]{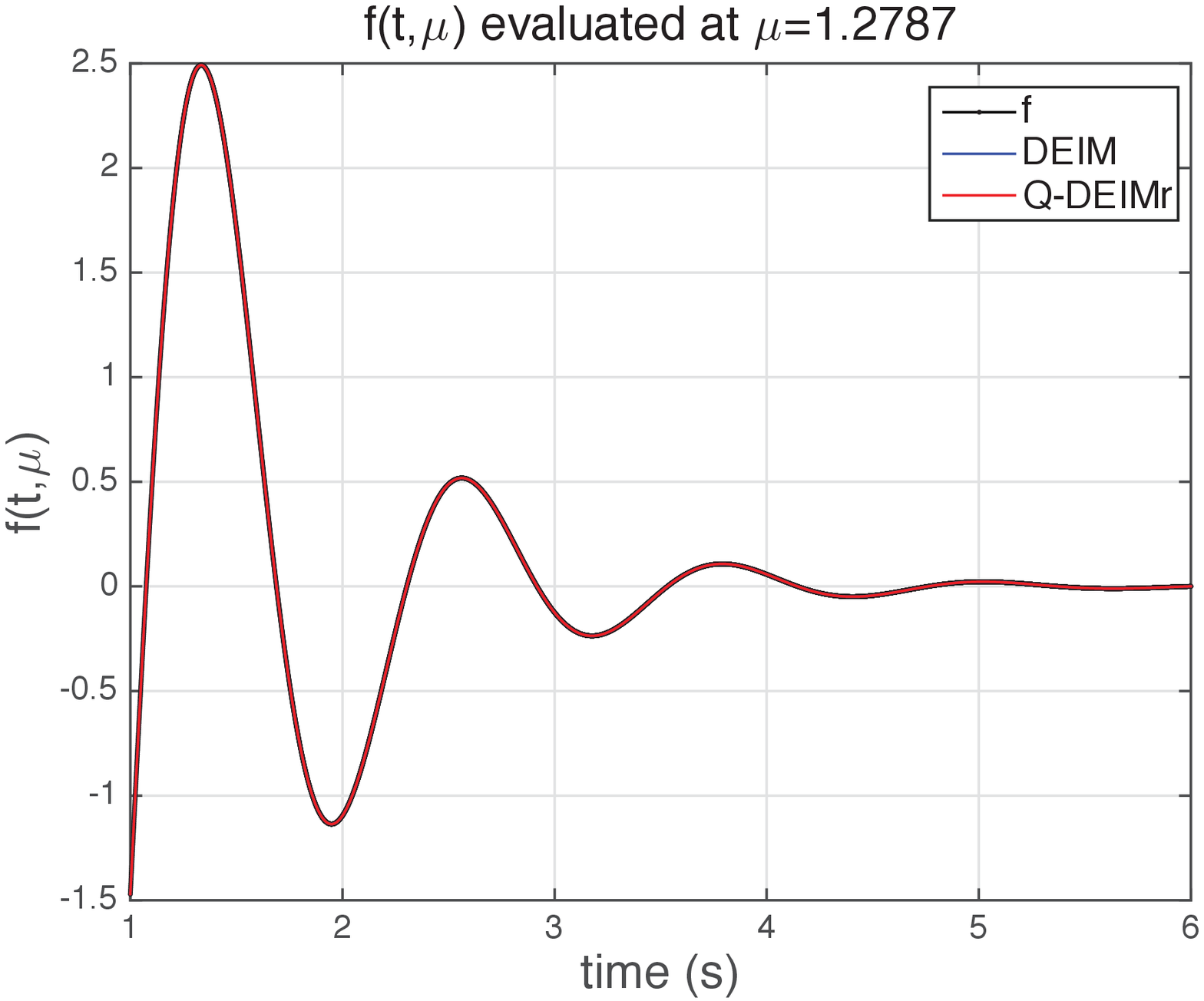}
		\hspace{1ex}
		\includegraphics[width=2.3in,height=2.0in]{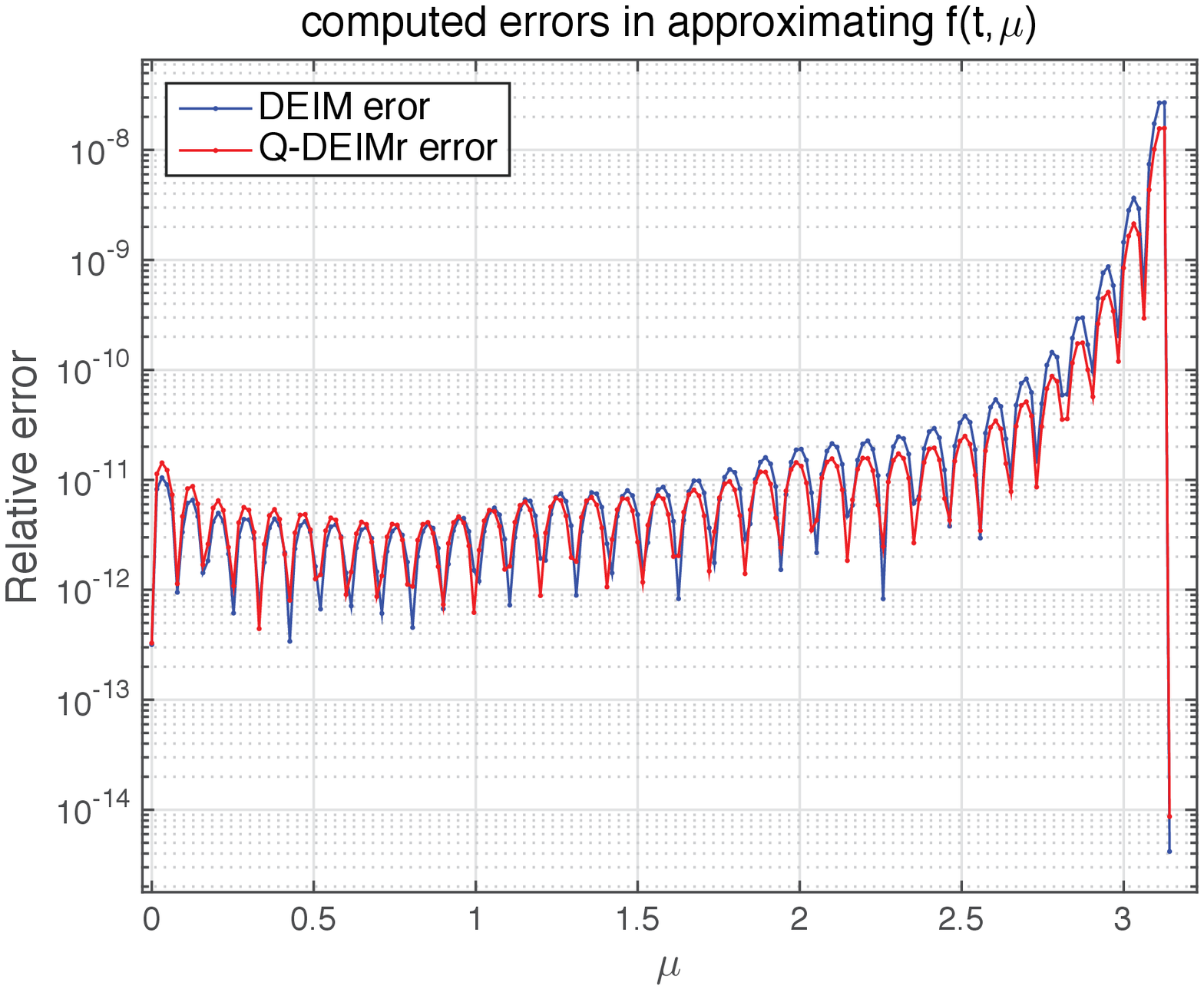}
			\end{center}
			\caption{\label{FIG1dimer} (Example \ref{EX:dimer}) Comparison of the approximation errors of \deim and \dimer for $\f(t;\mu) = 10 e^{-\mu t}(\cos(4\mu t) + \sin(4\mu t))$. Left figure: The function evaluated at $\mu=1.2787$.
				Right figure: The relative errors $\|f_{\mu_j}^{\deim} - f_{\mu_j}\|_2/\|f_{\mu_j}\|_2$, $\|f_{\mu_j}^{\dime} - f_{\mu_j}\|_2/\|f_{\mu_j}\|_2$ for $200$ uniformly spaced values of $\mu_j\in[0,\pi]$. \dimer used $113$ rows (at most $34$ at the same time) of $\U$ to make a selection; \deim used all $10000$ rows.}
		\end{figure} 
		\begin{figure}[hhh]
			\begin{center}
	\includegraphics[width=2.3in,height=2.0in]{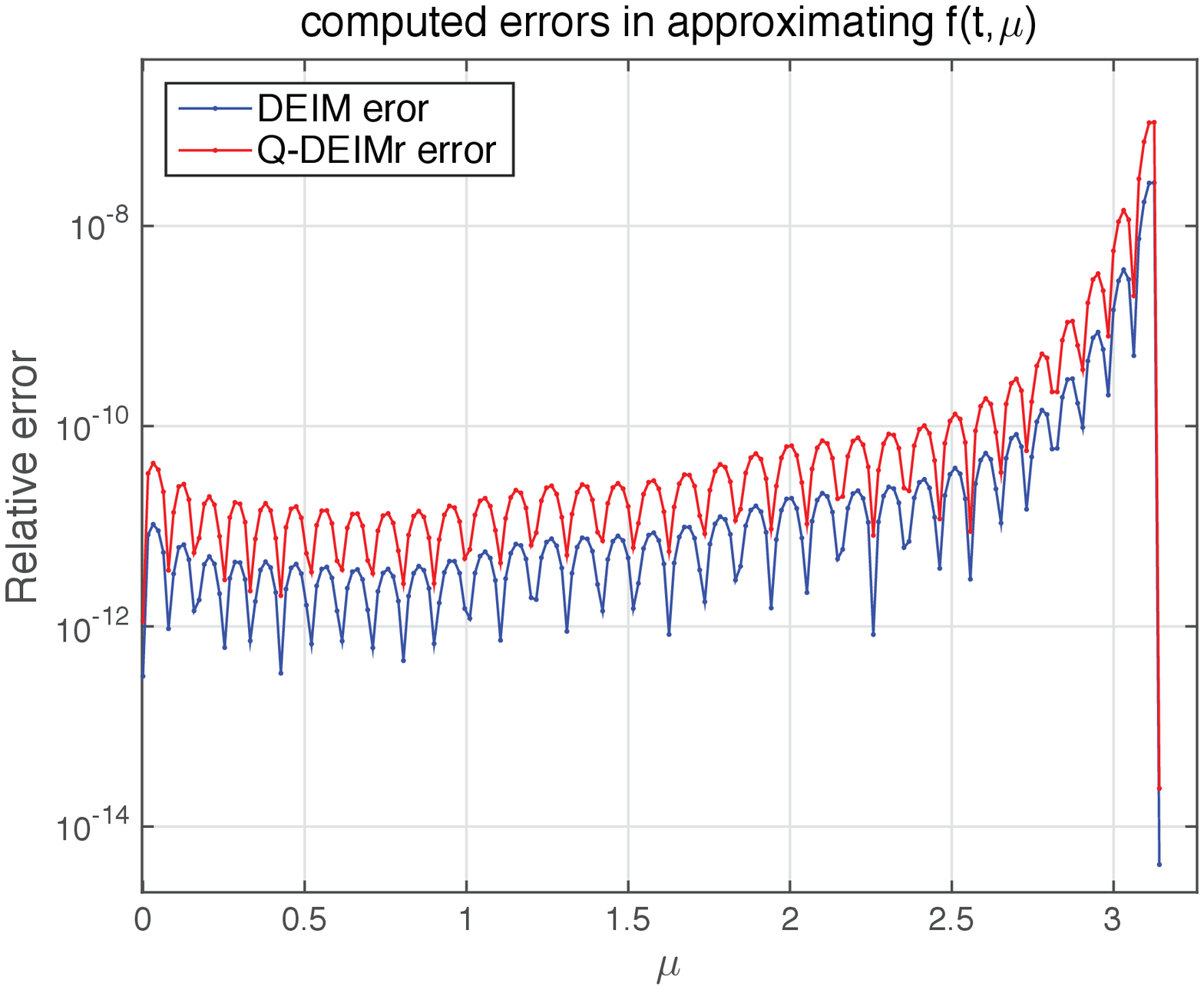}
	\hspace{1ex}
	\includegraphics[width=2.3in,height=2.0in]{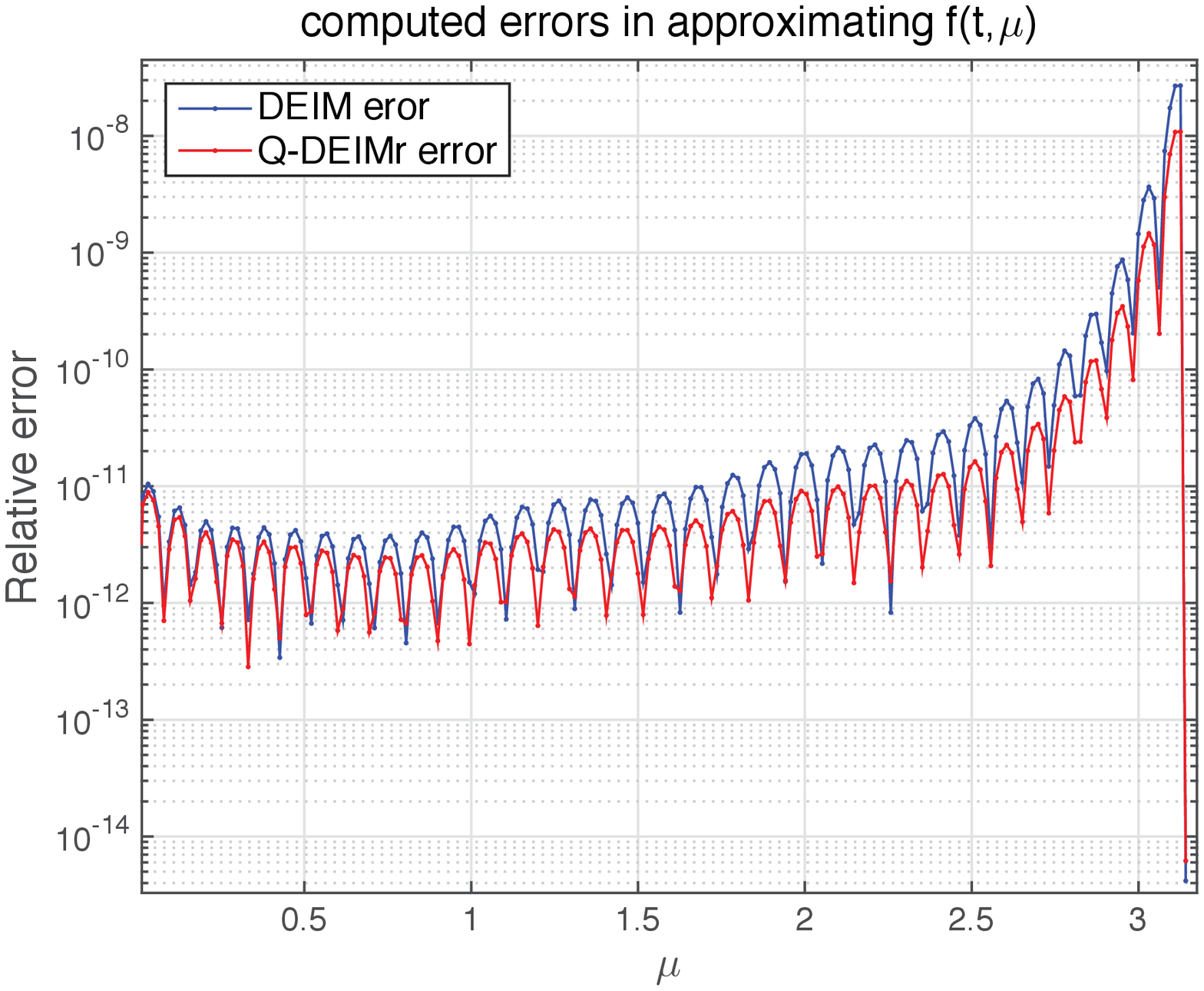}
			\end{center}
			\caption{\label{FIG2dimer} (Example \ref{EX:dimer}) Comparison of the relative errors $\|f_{\mu_j}^{\deim} - f_{\mu_j}\|_2/\|f_{\mu_j}\|_2$, $\|f_{\mu_j}^{\dime} - f_{\mu_j}\|_2/\|f_{\mu_j}\|_2$ for $\f(t;\mu) = 10 e^{-\mu t}(\cos(4\mu t) + \sin(4\mu t))$. Left figure: Upper bound in \dimer set to $m\sqrt{n-m+1}$; it used $53$ rows with $\C\approx 2532.9$.
				Right figure: Upper bound in \dimer set to $\sqrt{m}\sqrt{n-m+1}/5$; it used $220$ rows with $\C\approx 103.1$. In both cases, \deim used all $10000$ rows of $\U$ to make a selection, and \dimer was allowed to process at most $34$ rows of $\U$ at the same time.}
		\end{figure} 	
Since at this point no sophisticated sampling strategy is used, the results may vary, depending on $n$, $m$, and the given upper bound for $\C$. Figure \ref{FIG2dimer} illustrates how the prescribed upper bound for $\C$ changes the execution and performance of \dimer. By visiting only  $220$ rows, we fully recover the accuracy 
achieved by using  all  $10000$ rows of $\U$.
	}	
\end{example}

\begin{remark}{\em 
The technical details of using ICE in the above procedure are similar to the rank revealing method with windowed column pivoting \cite{bischof-q-orti-RRQR-1998}. However, the overall procedure is substantially different in spirit. 
The difference is that our objective is not to compute a rank revealing QR factorization (we know that $\W$ is of full row rank, $\W\W^*=\Id_m$, and we do not even need its QR factorization), but just to find a well conditioned submatrix. This allows to touch only a selection of columns of $\W$ and exit when a sufficiently well conditioned submatrix is determined. 
}
\end{remark}
\begin{remark}{\em 
 The quest for a well-conditioned submatrix of $\W$ can be obviously parallelized, and many processors can independently work on different  (not necessarily disjoint)  subsets of column indices, with no need whatsoever to engage in communication, until one finds suitable columns and sends a halt signal. If more than one selection is found, the best one will be chosen. 
	}
\end{remark}

The column selection procedure can be improved at a cost of one pass through the array $\W$ to compute the column norms $\omega_i = \| \W(:,i) \|_2$. Such  additional information can be used e.g. in the following two ways:

$\bullet$ Select from the sorted columns in batches, as needed, starting from the largest ones. The norms of the columns in the active set can also be down-dated, using the procedure from column pivoted QR with numerically robust implementation  \cite{drmac-bujanovic-2008}.

$\bullet$ Define $p_i = \omega_i^2/(\sum_{j=1}^n\omega_j^2)$, $i=1,\ldots, n$. Then
$(p_1,\ldots,p_n)$ is a probability distribution that can be used to draw column samples. It prefers larger columns. Each column is used only once, and the distribution is computed for the active set. 

This opens a completely new aspect of \deim and establishes its connection to randomized numerical linear algebra, in particular with randomized sampling of rows of orthonormal matrices \cite{Ipsen:2014:ECS}. 
In particular, one can view \dimer as a guided randomized sampling algorithm for orthonormal matrices. 
Detailed analysis of blending the two procedures is omitted here; it will be available in our subsequent work.   

\begin{remark}\label{REM:extra}{\em 
Our experiments with randomized selection indicate that merely picking $m$ random interpolation indices will not work in general and sophisticated strategies of \deim, \dime, \dimer are necessary for a reliable and robust black-box procedure. Indeed, this is already revealed in Example \ref{EX:dimer}; the initial random selection is not enough and \dimer brings in additional rows to process. Clearly, the results depend on the function being approximated but just to illustrate the importance of the selection principle of \deim and \dimer, below we show the reconstruction accuracy of \deim and randomly selected indices (with few trials to select better rows, but without the condition number control as in \dimer) for two
nonlinear parametrized functions. \textcolor{black}{As the figures illustrate, the random selection without the techniques from the DEIM procedures may perform very poorly.}	
\begin{figure}[hhh]
	\begin{center}
		\includegraphics[width=2.3in,height=2.0in]{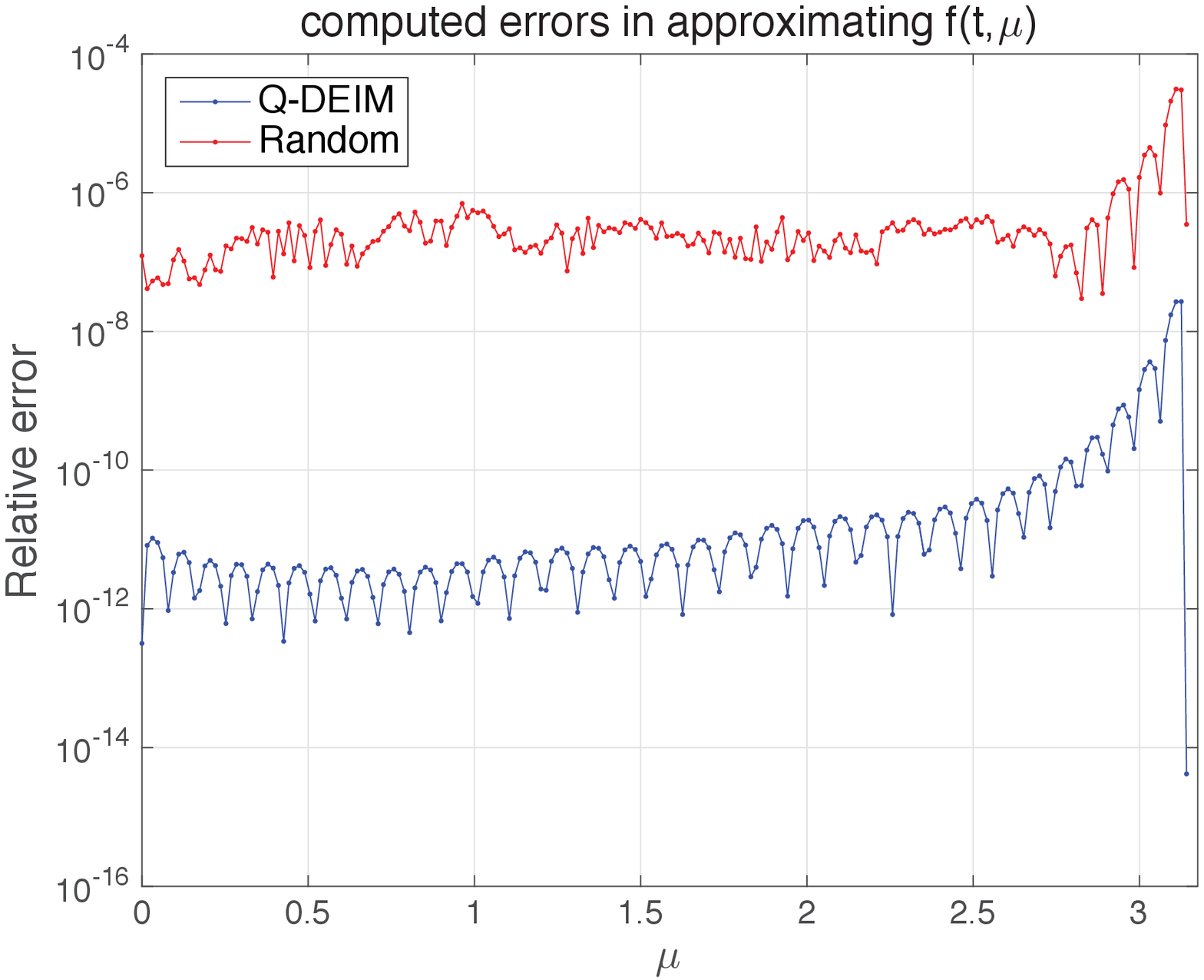}
		\includegraphics[width=2.3in,height=2.0in]{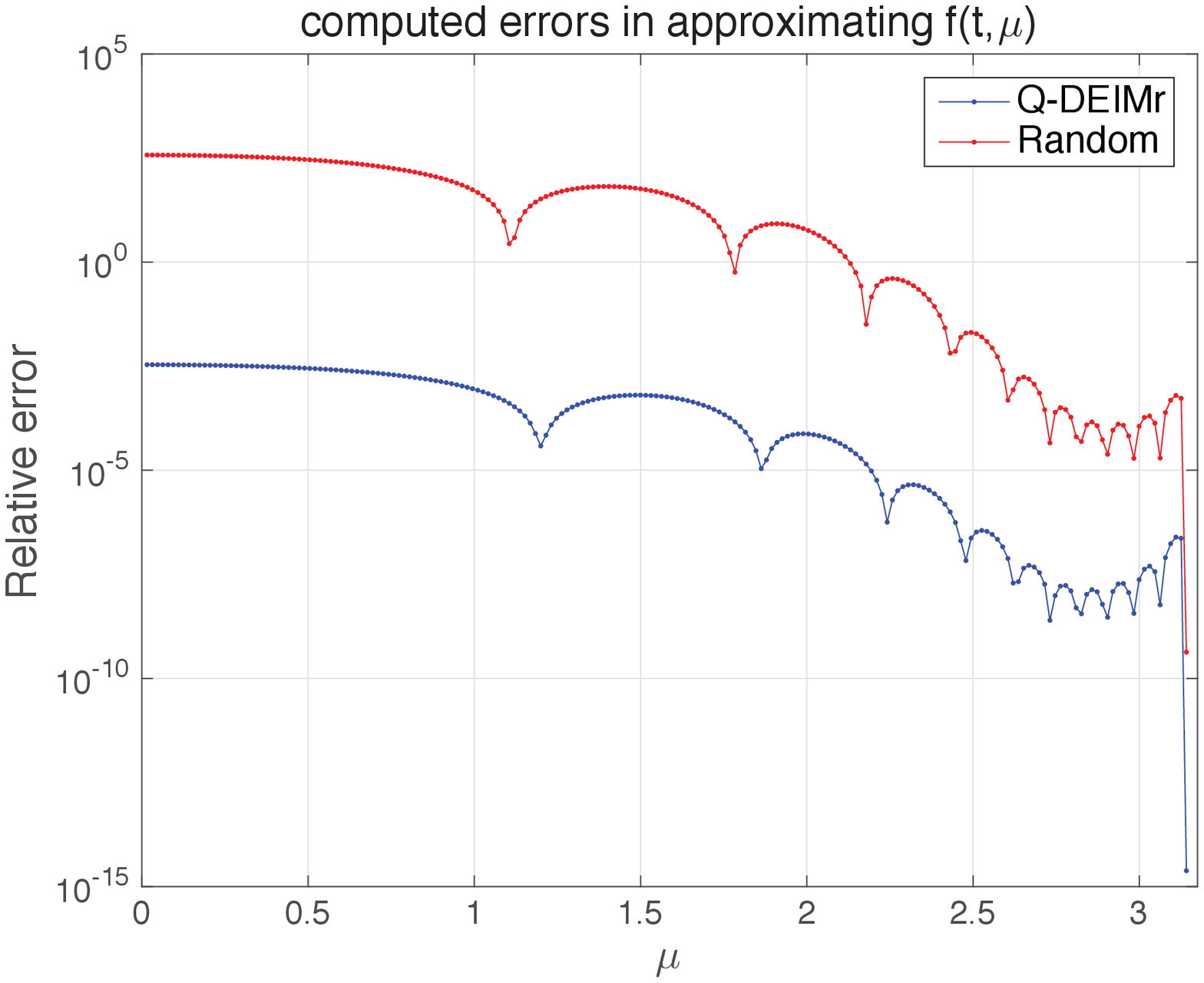}
	\end{center}
	
	\caption{(Remark \ref{REM:extra}) The reconstruction accuracy of a plain random selection of indices, as compared to \dime and \dimer. The first plot uses the data generated by the 
		function from Example \ref{EX:dimer}. The function used for te second plot is $f(x,\mu)=\sinh((\mu*\cosh(\mu/x)))$, $0.1\leq x\leq 6$, $0\leq \mu\leq \pi$, and $m=11$ out of $n=2000$ indices are selected.}
\end{figure} 
		}
\end{remark}

\section{Conclusions and Future Work}\label{S=Conclusion}
Using the tools from QR factorization with column pivoting, this paper has introduced a new \deim index selection strategy, that is  invariant under orthogonal transformations, with a sharper error bound for the \deim projection error. The new approach, called \dime, is tested on several numerical examples and it performs as well as the original \deim selection procedure. For the cases of large dimensions, 
a modification is proposed that uses only randomly sampled rows of the data matrix yet still leading to high-fidelity  approximations.

In addition to the nonlinear model reduction and parametrized function settings  we presented here,  the new \dime selection is well suited for several important applications that we are currently investigating for our  subsequent work. One such application is randomized sampling of rows of orthonormal matrices for solving the least-squares problems effectively in the cases with huge row dimension. The second one is the \deim induced CUR factorization recently introduced by Sorensen and Embree \cite{DEIM-CUR}. A third application arises in 
nonlinear inversion and parametric model reduction, see, e.g., \cite{BGWSIREV,DOT2015}, where an affine decomposition is needed for a parametric 
matrix  $A(p) \in \mathbb{R}^{n \times n}$  for efficient online model reduction step. This is usually handled by vectorizing $A(p)$ which might lead to very large row dimension depending on the sparsity pattern of $A(p)$. We are currently testing \dime and \dimer in these settings. 
\textcolor{black}{Further, both \deim and \dime (including \dimer) can adopt an updating scheme  when the nonlinear snapshot basis is obtained by the SVD: one can enlarge $m$ and $\U$ incrementally to yield better approximations. In \dime, increasing $m$ will require updating a rank-revealing QR-decomposition; thus finding an efficient updating scheme for this incremental implementation will prove very useful.}

\section{Acknowledgments}
We would like to thank Dr.\ Saifon  Chaturantabut for providing the data and code for the FitzHugh-Naguma Model Example in \S \ref{ex:fn}, \textcolor{black}{and Dr.\ Christopher Beattie, Dr.\ Mark Embree and Dr.\ Danny Sorensen} for various enlightening discussions. 
The work of Drma\v{c}  was supported by the grant
		HRZZ-9345 from the Croatian Science Foundation, in
		part by the NSF through grant DMS-1217156, and by the
		Interdisciplinary Center for Applied Mathematics
		during his research visit to the Department of Mathematics at Virginia Tech in the 2014-2015 Academic Year. The work of Gugercin was supported in
		part by the NSF through grant DMS-1217156.
The authors acknowledge partial support of the Einstein Foundation and of the Technische Universit\"{a}t Berlin during the completion of this work, and we thank Dr.\ Christopher Beattie and Dr.\ Volker Mehrmann for hosting us in Berlin. 		

\bibliography{bibDEIM}

\begin{thebibliography}{10}

\bibitem{LAPACK}
{\sc E.~Anderson, Z.~Bai, C.~Bischof, L.~S. Blackford, J.~Demmel, J.~J.
  Dongarra, J.~Du~Croz, S.~Hammarling, A.~Greenbaum, A.~McKenney, and
  D.~Sorensen}, {\em LAPACK Users' Guide (Third Ed.)}, Society for Industrial
  and Applied Mathematics, Philadelphia, PA, USA, 1999.

\bibitem{antil2013two}
{\sc H.~Antil, S.~E. Field, F.~Herrmann, R.~H. Nochetto, and M.~Tiglio}, {\em
  Two-step greedy algorithm for reduced order quadratures}, Journal of
  Scientific Computing, 57 (2013), pp.~604--637.

\bibitem{Astrid2008}
{\sc P.~Astrid, S.~Weiland, K.~Willcox, and T.~Backx}, {\em Missing point
  estimation in models described by proper orthogonal decomposition}, IEEE
  Transactions on Automatic Control,  (2008), pp.~2237--2251.

\bibitem{bai2006projection}
{\sc Z.~Bai and D.~Skoogh}, {\em A projection method for model reduction of
  bilinear dynamical systems}, Linear Algebra and its Applications, 415 (2006),
  pp.~406--425.

\bibitem{barrault04-EIM}
{\sc M.~Barrault, N.~C. Nguyen, Y.~Maday, and A.~T. Patera}, {\em An empirical
  interpolation method: Application to efficient reduced-basis discretization
  of partial differential equations}, C. R. Acad. Sci. Paris, S\'{e}rie I., 339
  (2004), pp.~667--672.

\bibitem{BreitenBenner2012b}
{\sc P.~Benner and T.~Breiten}, {\em Interpolation-based $\mathcal{H}_2$-model
  reduction of bilinear control systems}, SIAM Journal on Matrix Analysis and
  Applications, 33 (2012), pp.~859--885.

\bibitem{benner2015two}
{\sc P.~Benner and T.~Breiten}, {\em Two-sided projection methods for nonlinear
  model order reduction}, SIAM Journal on Scientific Computing, 37 (2015),
  pp.~B239--B260.

\bibitem{BGWSIREV}
{\sc P.~Benner, S.~Gugercin, and K.~Willcox}, {\em A survey of model reduction
  methods for parametric systems}, Tech. Rep. MPIMD/13-14, Max Planck Institute
  Magdeburg Preprint, August 2013.

\bibitem{berkooz}
{\sc G.~Berkooz, P.~Holmes, and J.~Lumley}, {\em {The proper orthogonal
  decomposition in the analysis of turbulent flows}}, Annual Review of Fluid
  Mechanics, 25 (1993), pp.~539--575.

\bibitem{bischof-q-orti-RRQR-1998-TOMS782}
{\sc C.~H. Bischof and G.~Quintana-Orti}, {\em Algorithm 782: codes for
  rank--revealing {QR} factorizations of dense matrices}, {ACM Transactions on
  Mathematical Software}, 24 (1998), pp.~254--257.

\bibitem{bischof-q-orti-RRQR-1998}
\leavevmode\vrule height 2pt depth -1.6pt width 23pt, {\em Computing
  rank--revealing {QR} factorizations of dense matrices}, {ACM Transactions on
  Mathematical Software}, 24 (1998), pp.~226--253.

\bibitem{ScaLAPACK}
{\sc L.~S. Blackford, J.~Choi, A.~Cleary, E.~D'Azeuedo, J.~Demmel, I.~Dhillon,
  S.~Hammarling, G.~Henry, A.~Petitet, K.~Stanley, D.~Walker, and R.~C.
  Whaley}, {\em ScaLAPACK User's Guide}, Society for Industrial and Applied
  Mathematics, Philadelphia, PA, USA, 1997.

\bibitem{bus-gol-65}
{\sc P.~A. Businger and G.~H. Golub}, {\em Linear least squares solutions by
  {Householder} transformations}, Numerische Mathematik, 7 (1965),
  pp.~269--276.

\bibitem{Carlberg2013}
{\sc K.~Carlberg, C.~Farhat, J.~Cortial, and D.~Amsallem}, {\em The {GNAT}
  method for nonlinear model reduction: Effective implementation and
  application to computational fluid dynamics and turbulent flows}, Journal of
  Computational Physics, 242 (2013), pp.~623--647.

\bibitem{chandras-ipsen-rrqr-94}
{\sc S.~Chandrasekaran and I.~C.~F. Ipsen}, {\em On rank--revealing
  factorizations}, SIAM Journal on Matrix Analysis and Applications, 15 (1994),
  pp.~592--622.

\bibitem{Chen1999}
{\sc Y.~Chen}, {\em Model order reduction for nonlinear systems}, Master's
  thesis, Massachusetts Institute of Technology, 1999.

\bibitem{condon2004model}
{\sc M.~Condon and R.~Ivanov}, {\em Model reduction of nonlinear systems},
  COMPEL-The international journal for computation and mathematics in
  electrical and electronic engineering, 23 (2004), pp.~547--557.

\bibitem{DOT2015}
{\sc E.~de~Sturler, S.~Gugercin, M.~Kilmer, S.~Chaturantabut, C.~Beattie, and
  M.~O'Connell}, {\em Nonlinear parametric inversion using interpolatory model
  reduction}, SIAM Journal on Scientific Computing, to appear; arXiv:1311.0922,
   (2015).

\bibitem{drmac-block-jacobi}
{\sc Z.~Drma\v{c}}, {\em A global convergence proof for cyclic {Jacobi} methods
  with block rotations}, SIAM Journal on Matrix Analysis and Applications, 31
  (2009), pp.~1329--1350.

\bibitem{drmac-bujanovic-2008}
{\sc Z.~Drma\v{c} and Z.~Bujanovi\'{c}}, {\em On the failure of rank revealing
  {QR} factorization software -- a case study}, ACM Trans. Math. Softw., 35
  (2008), pp.~1--28.

\bibitem{Everson1995}
{\sc R.~Everson and L.~Sirovich}, {\em {The {Karhunen-Loeve} Procedure for
  Gappy Data}}, Journal of the Optical Society of America, 12 (1995),
  pp.~1657--1664.

\bibitem{FKF-1968}
{\sc D.~{Faddeev}, V.~{Kublanovskaya}, and V.~{Faddeeva}}, {\em {Solution of
  linear algebraic systems with rectangular matrices.}}, {Proc. Steklov Inst.
  Math.}, 96 (1968), pp.~93--111.

\bibitem{flagg2013multipoint}
{\sc G.~Flagg and S.~Gugercin}, {\em Multipoint {V}olterra series interpolation
  and {$\mathcal{H}_2$} optimal model reduction of bilinear systems}, To appear
  in SIAM Journal on Matrix and Analysis and Applications,  (2015).
\newblock Available as arXiv preprint arXiv:1312.2627.

\bibitem{glover1984all}
{\sc K.~Glover}, {\em All optimal {Hankel}-norm approximations of linear
  multivariable systems and their $l_\infty$-error bounds}, International
  journal of control, 39 (1984), pp.~1115--1193.

\bibitem{Goreinov19971}
{\sc S.~Goreinov, E.~Tyrtyshnikov, and N.~Zamarashkin}, {\em A theory of
  pseudoskeleton approximations}, Linear Algebra and its Applications, 261
  (1997), pp.~1 -- 21.

\bibitem{gu2011qlmor}
{\sc C.~Gu}, {\em {QLMOR}: a projection-based nonlinear model order reduction
  approach using quadratic-linear representation of nonlinear systems},
  Computer-Aided Design of Integrated Circuits and Systems, IEEE Transactions
  on, 30 (2011), pp.~1307--1320.

\bibitem{gugercin2008hmr}
{\sc S.~Gugercin, A.~Antoulas, and C.~Beattie}, {\em $\mathcal{H}_2$ model
  reduction for large-scale linear dynamical systems}, SIAM Journal on Matrix
  Analysis and Applications, 30 (2008), pp.~609--638.

\bibitem{hinze2005proper}
{\sc M.~Hinze and S.~Volkwein}, {\em Proper orthogonal decomposition surrogate
  models for nonlinear dynamical systems: Error estimates and suboptimal
  control}, in Dimension Reduction of Large-Scale Systems, Springer, 2005,
  pp.~261--306.

\bibitem{hotelling}
{\sc H.~Hotelling}, {\em {Analysis of a complex of statistical variables with
  principal components}}, Journal of Educational Psychology, 24 (1933),
  pp.~417--441,498--520.

\bibitem{Ipsen:2014:ECS}
{\sc I.~C.~F. Ipsen and T.~Wentworth}, {\em The effect of coherence on sampling
  from matrices with orthonormal columns, and preconditioned least squares
  problems}, SIAM Journal on Matrix Analysis and Applications, 35 (2014),
  pp.~1490--1520.

\bibitem{kahan-66}
{\sc W.~Kahan}, {\em Numerical linear algebra}, Canadian Mathematical Bulletin,
  9 (1965), pp.~757--801.

\bibitem{kunisch2002galerkin}
{\sc K.~Kunisch and S.~Volkwein}, {\em {Galerkin} proper orthogonal
  decomposition methods for a general equation in fluid dynamics}, SIAM Journal
  on Numerical analysis, 40 (2002), pp.~492--515.

\bibitem{law-han-74}
{\sc C.~L. Lawson and R.~J. Hanson}, {\em Solving Least Squares Problems},
  Prentice--Hall Inc., Englewood Cliffs, N. J., 1974.

\bibitem{loeve}
{\sc M.~Lo\'{e}ve}, {\em {Probability Theory}}, D. Van Nostrand Company Inc.,
  New York, 1955.

\bibitem{lumley}
{\sc J.~Lumley}, {\em {The Structures of Inhomogeneous Turbulent Flow}},
  Atmospheric Turbulence and Radio Wave Propagation,  (1967), pp.~166--178.

\bibitem{moore1981principal}
{\sc B.~Moore}, {\em Principal component analysis in linear systems:
  Controllability, observability, and model reduction}, IEEE Transactions on
  Automatic Control, 26 (1981), pp.~17--32.

\bibitem{mullis1976synthesis}
{\sc C.~Mullis and R.~Roberts}, {\em Synthesis of minimum roundoff noise fixed
  point digital filters}, IEEE Transactions on Circuits and Systems, 23 (1976),
  pp.~551--562.

\bibitem{dansiam10}
{\sc S.~S.~Chaturantabut and D.~Sorensen.}, {\em Nonlinear model reduction for
  porous media flow}, in 2010 SIAM Annual Meeting, 2010.

\bibitem{DEIM}
{\sc S.~S.~Chaturantabut and D.~Sorensen}, {\em Nonlinear model reduction via
  discrete empirical interpolation}, SIAM Journal on Scientific Computing, 32
  (2010), pp.~2737--2764.

\bibitem{dan}
{\sc D.~Sorensen}.
\newblock Private communications, 2010.

\bibitem{DEIM-CUR}
{\sc D.~C. Sorensen and M.~Embree}, {\em {A {DEIM} induced {CUR}
  factorization}}, {CAAM Department Technical Report TR14-04}, Rice University,
  July 2014.

\end{thebibliography}
\bibliographystyle{siam}
\end{document}